\newtheorem{theorem}{Theorem}[section]
\newtheorem{lemma}[theorem]{Lemma}
\newtheorem{proposition}[theorem]{Proposition}
\newtheorem{corollary}[theorem]{Corollary}
\theoremstyle{definition}
\newtheorem{remark}{Remark}
\newtheorem{conjecture}{Conjecture}
\newtheorem{problem}{Problem}
\newcommand{\la}{ \lambda_n(\xi)  }
\newcommand{\w}{w_n(\xi) }
\newcommand{\wos}{w_n^{\ast}(\xi) }
\newcommand{\wo}{\widehat{w}_n(\xi) }
\newcommand{\ws}{ \widehat{w}_{n}^{\ast}(\xi) }
\begin{document}

\title[On two uniform exponents of approximation]{On two uniform exponents of approximation related to Wirsing's problem}

\author{ Johannes Schleischitz}

\thanks{Middle East Technical University, Northern Cyprus Campus, Kalkanli, G\"uzelyurt \\
	johannes@metu.edu.tr ; jschleischitz@outlook.com}


\begin{abstract}
	We aim to fill a gap in the proof of
    an inequality relating two exponents of uniform Diophantine approximation stated in a paper by Bugeaud. We succeed to verify the inequality in several instances, in particular for small dimension.
    Moreover, we provide counterexamples to generalizations, which contrasts the case
    of ordinary approximation where such phenomena do not occur. Our results
    contribute to the understanding of
    the discrepancy between small absolute values of a polynomial at a given real number and approximation to the number by algebraic
    numbers of absolutely bounded degree,
    a fundamental issue in the famous problem of Wirsing and variants.
\end{abstract}

\maketitle

{\footnotesize{

{\em Keywords}: Wirsing's problem, exponents of Diophantine approximation \\
Math Subject Classification 2010: 11J13, 11J82}}

\vspace{1mm}

\section{Introduction}

\subsection{Some exponents of approximation and basic properties}  \label{intro}
Let $n\ge 1$ be an integer and $\xi$ be a 
transcendental real number. For a polynomial
$P$, by its height $H_P$
we mean the maximum modulus of its coefficients. For a parameter $X>1$, 
we call the non-zero integer polynomial $P_X$ of degree at most $n$ and height at most $X$ the best approximation polynomial
for $n,\xi$ up to parameter $X$, if $P=P_X$ minimizes $|P(\xi)|$ among all such integer polynomials $P$. This is defined uniquely up to sign since $\xi$ is transcendental. Similarly, write $\alpha_X$ for the algebraic number of degree at most $n$ and height at most $X$ that minimizes $H(\alpha_{X})\cdot |\xi-\alpha_X|$. Here
the height of $\alpha$, let us denote it $H(\alpha_{X})$, is simply the height of its minimal polynomial over the integers with coprime coefficients.
Define the local exponents of approximation
at parameter $X>1$ by
\[
w(n,\xi,X)= -\frac{\log |P_X(\xi)| }{\log X}, \qquad w^{\ast}(n,\xi,X)=-\frac{\log (H(\alpha_{X})\cdot |\alpha_X-\xi|) }{\log X}
\]
and let us derive the classical exponents of approximation
\begin{equation} \label{eq:AB}
\w= \limsup_{X\to\infty} w(n,\xi,X), \qquad
\wos= \limsup_{X\to\infty} w^{\ast}(n,\xi,X)
\end{equation}
and
\begin{equation} \label{eq:BA}
\wo= \liminf_{X\to\infty} w(n,\xi,X), \qquad
\ws= \liminf_{X\to\infty} w^{\ast}(n,\xi,X).
\end{equation}
The ordinary exponents (without ``hat'')
may take the formal value $+\infty$, on the other hand
the uniform exponents (with ``hat'') are bounded independently of $\xi$ in terms of $n$, see Davenport and Schmidt~\cite{davsh}
and also~\cite{buschlei}. 
The exponents $w_n(\xi)$ date back to Mahler,
the exponents $\wos$ were introduced
by Wirsing~\cite{wirsing} in his famous paper
on approximation to real numbers by algebraic numbers of bounded degree. The uniform exponents $\wo, \ws$ were introduced by Bugeaud and Laurent~\cite{buglau}.
It is easily seen that
all four types of exponents 
from \eqref{eq:AB}, \eqref{eq:BA}
form 
non-decreasing sequences in $n$, in particular we have 
\begin{equation} \label{eq:jar}
    w_1(\xi)\le w_2(\xi)\le \cdots, \qquad
    \widehat{w}_1^{\ast}(\xi)\le \widehat{w}_2^{\ast}(\xi)\le \cdots.
\end{equation}
Several other relations are known, most notably Dirichlet's Theorem implies
\begin{equation} \label{eq:diri}
    \w\ge \wo\ge n, \qquad n\ge 1.
\end{equation}
A famous open problem of Wirsing~\cite{wirsing} asks if a strengthening of \eqref{eq:diri} of the form
$w_n^{\ast}(\xi)\ge n$ holds as well.
As a rather short argument 
(see Proposition~\ref{wellk} below)
yields for any $\xi$ the estimates
\begin{equation} \label{eq:lordi}
    \ws\le \wo, \qquad \wos\le \w,
\end{equation}
the conjectural inequality is indeed stronger,
and verified for $n\in\{1,2\}$ only so far~\cite{die2}. 
It was already noticed by Wirsing that for any $n$ and almost all real numbers
in the sense of Lebesgue measure
$\wos=n$, in fact $\ws=n$ as well as a consequence of \eqref{eq:LE} below from~\cite{buglau}, and Sprind\v{z}uk's result~\cite{sprind}. However, for general $\xi$
the best known lower bounds for $\wos$ and large $n$ of order $n/(2-\log 2)$
have very recently been obtained by Poels~\cite{poe}, improving on
$n/\sqrt{3}$ from~\cite{badsch}.
Since $\widehat{w}_1(\xi)=\widehat{w}_1^{\ast}(\xi)$ for 
any transcendental real $\xi$,
in fact both equal to $1$ for any irrational $\xi$ by Khintchine~\cite{khint},
combining \eqref{eq:jar}, \eqref{eq:diri} 
we get
\begin{equation} \label{eq:stern}
    w_n^{\ast}(\xi)\ge \widehat{w}_n^{\ast}(\xi)\ge 1, \qquad n\ge 1.
\end{equation}

Let us further define 
\[
\kappa(n,\xi,X):= w(n,\xi,X)-w^{\ast}(n,\xi,X),
\]
and accordingly
\[
\underline{\kappa}_n(\xi):= \liminf_{X\to\infty} \kappa(n,\xi,X), \quad \overline{\kappa}_n(\xi):= \limsup_{X\to\infty} \kappa(n,\xi,X).
\]
These exponents have not been explicitly defined or studied before. Note that
\begin{equation} \label{eq:ind}
\overline{\kappa}_n(\xi) \ge \max\{ \w - \wos , \wo - \ws\}
\end{equation}
and conversely
\begin{equation} \label{eq:Con}
\underline{\kappa}_n(\xi) \le \min\{ \w - \wos , \wo - \ws\}
\end{equation}
as the according estimates are true for general pairs of real functions $f(X), g(X)$ in place of
$w(n,\xi,X)$ and $w^{\ast}(n,\xi,X)$.
Note that \eqref{eq:ind}, \eqref{eq:Con}
and~\cite[Theorem~2.10]{bugdraft}
combined yield that $\underline{\kappa}_n(\xi)=\overline{\kappa}_n(\xi)=0$ for any algebraic number $\xi$.
Moreover a generalization of
both claims of \eqref{eq:lordi} of the form 
\begin{equation} \label{eq:frit}
    \underline{\kappa}_n(\xi)\ge 0,
\end{equation}
holds, by the same Proposition~\ref{wellk}.
Note however that there is no reason for 
the quantities $\underline{\kappa}_n(\xi), \overline{\kappa}_n(\xi)$ to be monotonic in $n$, in particular an analogue of \eqref{eq:jar} does not hold. We believe
that these exponents carry important 
information on the discrepancy between
small values of polynomials and approximation to real numbers by algebraic numbers, closely related to Wirsing's problem and variants, and deserve to be studied in detail.

\subsection{An open problem and related
questions}
Let us return to the classical exponents.
As stated in~\cite{bugbuch},
conversely to \eqref{eq:lordi} we have
\begin{equation} \label{eq:ordi}
    \w-\wos\le n-1.
\end{equation}
This follows from combination of Lemma~\ref{wirsing} and Lemma~\ref{feld} below. The sharpness of \eqref{eq:ordi} is open in general, for partial results 
see~\cite{b2003}.
It was further stated in~\cite{bugdraft} that we similarly have
\begin{equation} \label{eq:aaa}
    \wo-\ws\le n-1.
\end{equation}
The latter estimate is sharp, as it was noticed in~\cite{bugdraft} that any Liouville number $\xi$ (i.e. $\xi$ is transcendental and $w_1(\xi)=\infty)$ satisfies for any $n\ge 1$ the identities
\begin{equation} \label{eq:holdss}
\wo=n, \qquad \ws=1.
\end{equation}
The left identity stems from~\cite{mathematika}, see Theorem~\ref{Sch} below. In fact, it was shown in~\cite{bugdraft} that the interval $[n-2+\frac{1}{n},n-1]$
is contained in the spectrum of the difference
$\wo-\ws$, and upon assuming some natural conjecture to be true, the spectrum contains $[0,n-1]$.

However, there seems to be a considerable gap in the sketched proof
of \eqref{eq:aaa}.
Indeed, the proof is based on Lemma~\ref{feld} below, which however only holds for separable $P$. In contrast to \eqref{eq:ordi} for the ordinary exponents, where this problem
can be overcome rather easily by Lemma~\ref{wirsing} due to Wirsing below,
for uniform exponents 
it is a priori unclear why one can 
restrict to this case. While
at first sight this may appear to be a rather minor technicality, it
turns out to cause great problems, as evidenced by the results in~\S~\ref{counter} below.

The purpose of this note is
twofold: Firstly, we want to address the very problem of rigorously settling \eqref{eq:aaa}. This will be the content
of~\S~\ref{s1}.
Secondly, in~\S~\ref{se3}, we study the related parametric functions $\kappa(n,\xi,X)$ and their 
extremal values $\underline{\kappa}_n(\xi), \overline{\kappa}_n(\xi)$. We once again stress that understanding these
quantities is at the core of Wirsing's famous problem discussed in~\S~\ref{intro}.

To this end, we again 
want to emphasize that
the delicacy of proving \eqref{eq:aaa}
is reflected
in our results in~\S~\ref{counter} below,
where we will provide counterexamples
to stronger claims involving the exponents
$\kappa$ and their extremal values.
In particular, Theorem~\ref{co} below provides a counterexample of a parametric inequality in the spirit of \eqref{eq:frit} that if true would generalize both \eqref{eq:ordi}, \eqref{eq:aaa}. This illustrates that indeed one has to be very careful with non separable polynomials.

\section{Results towards a proof of \eqref{eq:aaa}} \label{s1}
Despite the examples from~\S~\ref{counter} mentioned above that suggest otherwise,
we still believe that \eqref{eq:aaa} is true for all $n$.
While we were unable to prove \eqref{eq:aaa} in full generality, we provide several weaker claims.
First we verify it for small $n$.

\begin{theorem}  \label{H}
    Claim \eqref{eq:aaa} holds for $n\le 5$ and any transcendental 
    real number $\xi$.
\end{theorem}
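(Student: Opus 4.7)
The plan is to reformulate the claim as $\widehat{w}_n^\ast(\xi) \geq \widehat{w}_n(\xi) - (n-1)$, and to observe via \eqref{eq:stern} that this is automatic as soon as $\widehat{w}_n(\xi) \leq n$. The nontrivial regime is therefore $\widehat{w}_n(\xi) > n$. For each $X > 1$ I would factor the best polynomial as $P_X = \prod_i Q_i^{e_i}$ into distinct irreducible integer factors, and split into the separable subcase $\max_i e_i = 1$ and its complement.

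At each $X$ in the separable subcase, the argument used for the ordinary inequality \eqref{eq:ordi} carries over pointwise in $X$: Lemma~\ref{wirsing} furnishes an irreducible divisor $Q$ of $P_X$ with $|Q(\xi)| \ll |P_X(\xi)|$, and Lemma~\ref{feld}, applicable because $Q$ is separable, supplies a root $\alpha$ of $Q$ with $H(\alpha)|\xi - \alpha| \ll H(P_X)^{n-1} |P_X(\xi)|$. Since $H(P_X) \leq X$, this produces the desired witness $w^\ast(n, \xi, X) \geq w(n, \xi, X) - (n - 1) - o(1)$ at every such $X$.

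In the non-separable subcase some $Q_i$ has $e_i \geq 2$, and for $n \leq 5$ the constraint $e_i \deg Q_i \leq n$ forces $\deg Q_i \in \{1, 2\}$. Extracting a root $\alpha$ of the separable $Q_i$ via Lemma~\ref{feld}, and bounding $H(Q_i) \ll H(P_X)^{1/e_i}$ by Gelfond's inequality on Mahler measures, should yield $H(\alpha) \ll X^{1/e_i}$ and $H(\alpha)|\xi - \alpha| \ll |P_X(\xi)|^{1/e_i}$, provided one can control the complementary factor $R = P_X/Q_i^{e_i}$ at $\xi$ from below. Read at the reduced parameter $Y = X^{1/e_i}$, this says $w^\ast(m, \xi, Y) \geq w(n, \xi, X)$ with $m = \deg Q_i \leq 2$. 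If the non-separable subcase occupied a cofinal set of $X$ in a sufficiently strong logarithmic sense, passing to the liminf would force $\widehat{w}_m^\ast(\xi) \geq \widehat{w}_n(\xi) > n$, contradicting the identities $\widehat{w}_1(\xi) = 1$ and $\widehat{w}_2(\xi) = 2$. Hence the set of separable $X$ is cofinal in that quantitative sense, and the witness $\alpha_{X_1}$ at the nearest preceding separable parameter $X_1$ can be reused at a subsequent non-separable $X$ to recover the required bound on $w^\ast(n, \xi, X)$.

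The main obstacle is pinning down what "sufficiently strong logarithmic sense" means and checking that the transfer from $X_1$ to $X$ loses only a controlled amount consistent with the target $\widehat{w}_n(\xi) - (n-1)$, together with the lower bound on $|R(\xi)|$ needed in the non-separable extraction (when it fails one has to argue instead via a factor of $R$, potentially iterating). For $n \leq 5$ the finite list of admissible profiles $(e_i, \deg Q_i)$ is short enough for a direct case analysis, and the fact that $m \leq 2$ makes the lower-dimensional input unambiguous via the exact values $\widehat{w}_1 = 1$ and $\widehat{w}_2 = 2$. I would expect the approach to break at $n \geq 6$, where $\deg Q_i = 3$ becomes possible and the currently known upper bounds on $\widehat{w}_3$ are not sharp enough for the contradiction step.
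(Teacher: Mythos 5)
Your skeleton (the separable versus non-separable dichotomy, and the observation that for $n\le 5$ a repeated irreducible factor must have degree at most $2$) matches the paper's strategy, and your separable subcase is exactly Lemma~\ref{sepp}. But the closing of the non-separable subcase has two genuine gaps. First, the contradiction you aim for rests on the ``identity'' $\widehat{w}_2(\xi)=2$, which is false: the spectrum of $\widehat{w}_2$ is dense in $[2,(3+\sqrt{5})/2]$, so no contradiction follows from $\widehat{w}_2^{\ast}(\xi)>2$ alone. Second, and independently, estimates at a cofinal set of reduced parameters $Y=X^{1/e_i}$ cannot bound the \emph{liminf} $\widehat{w}_m^{\ast}(\xi)$ from below, however one formalizes ``logarithmically dense''; the paper's way around this is to convert the cofinal information into statements about the \emph{ordinary} (limsup) exponents $w_1(\xi)$, $w_2(\xi)$, and then invoke two nontrivial external inputs absent from your argument: Theorem~\ref{Sch} ($w_1(\xi)\ge n$ forces $\widehat{w}_n(\xi)=n$, reducing to the trivial regime) and Theorem~\ref{BuS} ($w_2(\xi)>n+1$ forces $\widehat{w}_n(\xi)\le n+1$, so that only $w^{\ast}(n,\xi,X)\ge 2-o(1)$ is needed at the bad parameters). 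Without such inputs, a very small repeated factor of degree $\le 2$ is not in tension with anything.

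The other unresolved point is the lower bound on the cofactor $R=P_X/Q_i^{e_i}$, which you flag but defer to an unspecified iteration. The paper avoids this issue entirely by applying Feldman's Lemma~\ref{feld} to the full radical $\prod_i Q_i$ rather than to a single repeated factor: every factor that is small at $\xi$ then \emph{helps} the root approximation, and the only loss is the bookkeeping term $\sum_i\bigl((\alpha_i-1)\beta_i\gamma_i+\beta_i(d_i-1)\bigr)$ of Lemma~\ref{lemar}. The resulting numerical criterion, combined with the two reductions above, covers all factorization profiles for $n\in\{4,5\}$ only because of a tight overlap of parameter ranges (for instance, in the case $P_X=Q_1^2$ with $n=4$, the criteria $\gamma\le 3/\beta-1$ and $\gamma\ge 1+2/\beta$ together cover everything precisely because $\beta\le 1/2$). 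Your proposal does not reach the point where such an overlap could be checked, so as written it does not establish the theorem even for $n=4$.
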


While the conclusion for $n\le 3$ is short by combining results from~\cite{bugbuch, mathematika}, 
the cases $n=4, n=5$ require a more
sophisticated approach.
With more effort it is likely that the range for $n$ can be extended by the underlying arguments of the latter cases. However we did not intensify our investigations in this direction, and
are doubtful that the strategy is sufficient for a proof for general $n$
without further new ideas.

Next we want to state criteria implying
\eqref{eq:aaa} for arbitrary $n$.
In this context we want to define 
another classical exponent of approximation. Let
the ordinary exponent of simultaneous approximation denoted by $\la$ be the supremum of $\lambda$ so that
\[
\max_{1\le j\le n} |q\xi^j -p_j|\le q^{-\lambda}
\]
has infinitely many solutions in integer
vectors $q,p_1,\ldots,p_n$.

\begin{theorem} \label{B}
    Let an integer $n\ge 1$ and a transcendental real number $\xi$ satisfy any of the following conditions
    \begin{itemize}
        \item[(i)] $\wo=n$
        \item[(ii)] $\wo>2n-3$
        \item[(iii)] $\ws\ge n-2$
        \item[(iv)] $\lambda_n(\xi)\in [\frac{1}{n},\frac{1}{n-2}]\cup (1,\infty]$
        \item[(v)] $w_1(\xi)\ge n$
        \item[(vi)] $w_n(\xi)\le \frac{2n-1+\sqrt{4n-3}}{2}$
    \end{itemize}
    Then \eqref{eq:aaa} holds.
\end{theorem}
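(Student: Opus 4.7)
The plan is to handle the six hypotheses (i)--(vi) separately, combining the Bugeaud--Laurent inequality \eqref{eq:LE} with Dirichlet's lower bound \eqref{eq:diri}, monotonicity \eqref{eq:jar}, and structural information on the best approximation polynomials $P_X$. In each case the objective is either to invoke Lemma~\ref{feld} on a separable $P_X$ (or on a separable surrogate) or to reduce to a situation where \eqref{eq:LE} directly forces $\ws\ge\wo-(n-1)$.

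Conditions (i) and (v) are the most direct. For (i), $\wo=n$ together with $\ws\ge 1$ from \eqref{eq:stern} gives $\wo-\ws\le n-1$ immediately. For (v), the hypothesis $w_1(\xi)\ge n$ should force $\wo(\xi)=n$ via a strengthening of Theorem~\ref{Sch} beyond the Liouville regime \eqref{eq:holdss}, reducing to (i). Condition (vi) is tailored to \eqref{eq:LE}: a short algebraic calculation shows that $x^2-(2n-1)x+(n-1)^2\le 0$ is exactly equivalent to $x/(x-n+1)\ge x-n+1$ for $x\ge n$, so using $\wo\le\w\le (2n-1+\sqrt{4n-3})/2$ together with the Bugeaud--Laurent bound (which I expect to take the shape $\ws\ge\wo/(\wo-n+1)$) yields $\ws\ge\wo-(n-1)$. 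Condition (iv) then reduces to (i) or (vi) via Khintchine-type transference between $\la$ and $\wo$: the regime $\la>1$ transfers to $\wo=n$, while $\la\in[1/n,1/(n-2)]$ transfers to a $\wo$-range already captured by (vi).

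The delicate cases are (ii) and (iii), both of which confront the separability issue head-on. For (ii), I would argue by contradiction: if for all sufficiently large $X$ the best polynomial $P_X=Q_X^{k_X}$ were a nontrivial perfect power ($k_X\ge 2$), then Gelfond's inequality ($H_{P_X}\asymp H_{Q_X}^{k_X}$) would make $Q_X$ provide approximation to $\xi$ in degree $d_X=n/k_X\le n/2$ of essentially the same strength, so that $\widehat{w}_{d_X}(\xi)\ge\wo(\xi)>2n-3$, contradicting the known uniform bounds on $\widehat{w}_m$ in lower degree. Hence arbitrarily large $X$ admit separable $P_X$, on which Lemma~\ref{feld} gives $\ws\ge\wo-(n-1)$. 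For (iii), one exploits the algebraic approximants $\alpha_X$ provided by $\ws\ge n-2$: their minimal polynomials $M_X$ are automatically separable and serve as surrogate best-approximation candidates, and combining the resulting $w$-bounds with \eqref{eq:LE} yields $\wo-\ws\le n-1$ after algebraic rearrangement. The principal obstacle throughout is case (ii), that is, rigorously excluding persistent non-separability of the $P_X$; this is precisely the gap in the sketched proof of \eqref{eq:aaa} that the present paper is designed to address, and the counterexamples in~\S\ref{counter} warn that no cleaner, condition-free strategy can be expected.
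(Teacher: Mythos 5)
Your treatment of (i), (v) and (vi) matches the paper: (i) is immediate from $\ws\ge 1$, (v) needs no ``strengthening'' of Theorem~\ref{Sch} since that theorem is already stated for $w_1(\xi)\ge n$ (not only for Liouville numbers) and gives $\wo=n$, and your quadratic $x^2-(2n-1)x+(n-1)^2\le 0$ is exactly the computation behind (vi) via \eqref{eq:LE}. The remaining three cases contain genuine gaps. The most serious is (ii). First, ``non-separable'' is not the same as ``nontrivial perfect power'': a non-separable $P_X$ has the form $Q^2R$ with $Q$ irreducible, and $R$ need not be a power of $Q$, so your contradiction hypothesis does not cover the general case. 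What one actually uses is that every irreducible factor of a non-separable $P_X$ of degree at most $n$ has degree at most $\max\{\lfloor n/2\rfloor,n-2\}=n-2$ for $n\ge 3$; Wirsing's Lemma~\ref{wirsing} then gives $w_{n-2}(\xi)\ge\wo\ge n$, and Theorem~\ref{BuS} with $m=n-2$ forces $\wo\le 2n-3$ --- a contradiction with (ii) already from non-separability at \emph{arbitrarily large} $X$. Second, and independently, your final step ``arbitrarily large $X$ admit separable $P_X$, on which Lemma~\ref{feld} gives $\ws\ge\wo-(n-1)$'' commits precisely the limsup/liminf confusion the paper is about: separability along a subsequence of $X$ only lower-bounds $w^\ast(n,\xi,X)$ along that subsequence, hence only controls $\wos$, not the liminf $\ws$. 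To conclude via Lemma~\ref{feld} (i.e.\ Lemma~\ref{sepp}) you need $P_X$ separable for \emph{all} large $X$, which is exactly what the contradiction argument above delivers under (ii).

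Case (iv) as you propose it also fails: classical Khintchine transference gives $\w\le n\la/(1-(n-1)\la)$ only for $\la<1/(n-1)$, and for $\la\in[1/(n-1),1/(n-2)]$ it places no upper bound on $\w$ at all, so the interval $[\frac1n,\frac1{n-2}]$ cannot be pushed into the range of (vi). The paper instead invokes the non-transference inequality $\ws\ge 1/\la$ from~\cite{ichdeb}, which for $\la\le 1/(n-2)$ yields $\ws\ge n-2$, i.e.\ a reduction to (iii); the regime $\la>1$ reduces to (i) via Theorem~\ref{Sch} as you say. Finally, for (iii) your ``separable surrogate'' idea via minimal polynomials of the $\alpha_X$ is not developed into an argument and is not needed: if $\ws\ge n-2$ then either $\wo\le 2n-3$, in which case $\wo-\ws\le(2n-3)-(n-2)=n-1$ outright, or $\wo>2n-3$ and one is back in case (ii). So (iii) and (iv) are both designed to funnel into (ii), which is the one case where the separability obstruction must genuinely be overcome.
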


It has very recently
been shown by Poels~\cite[Theorem~1.1]{poels} that
$\widehat{w}_3(\xi)\le 2+\sqrt{5}$ and
$\wo\le 2n-2$ for $n\ge 4$, improving for $3\le n\le 9$ on~\cite{ichacta}. 
Additionally~\cite[Theorem~1.2]{poels} shows that (ii) can only happen
for small $n$, as for sufficiently large
$n$ indeed stronger bounds of order
$\wo\le 2n-\sqrt[3]{n}/3$ were obtained.
See also~\cite{icharc}.

Notably, for $n\ge 3$ it is in fact not known if counterexamples to (i) exist. 
By \eqref{eq:diri}, one of the conditions (i), (ii) in particular applies to any $\xi$ if $n\le 3$ (and (iii) does as well), so part of Theorem~\ref{H} is directly implied. 
However, the claim for $n=2$ from Theorem~\ref{H} is in fact used in the proof of this case. 

A corollary of part (ii) is the following unconditional but weaker bound that may be of some interest for small $n$ especially.

\begin{theorem}
    For any integer $n\ge 3$ and any transcendental real number $\xi$ we have
    \[
    \wo-\ws \le 2n-4.
    \]
\end{theorem}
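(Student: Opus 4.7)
The plan is a short case split on the value of $\wo$, using Theorem~\ref{B}(ii) as a black box.

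First, suppose $\wo>2n-3$. Then part (ii) of Theorem~\ref{B} applies, yielding the stronger estimate \eqref{eq:aaa}, namely $\wo-\ws\le n-1$. For $n\ge 3$ we have $n-1\le 2n-4$, so the desired bound follows in this case.

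Next, suppose $\wo\le 2n-3$. Here I would use the universal lower bound $\ws\ge 1$ for the uniform exponent of approximation by algebraic numbers, which is contained in \eqref{eq:stern} (it follows from $\widehat{w}_1(\xi)=\widehat{w}_1^\ast(\xi)\ge 1$ by Dirichlet together with the monotonicity \eqref{eq:jar}). Subtracting, $\wo-\ws\le(2n-3)-1=2n-4$, which is exactly what we want.

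Combining the two cases gives $\wo-\ws\le 2n-4$ for every $n\ge 3$ and every transcendental~$\xi$. The only real ingredient is Theorem~\ref{B}(ii); there is no genuine obstacle, since the trivial inequality $\ws\ge 1$ carries the remaining range. The case threshold $n\ge 3$ enters precisely to ensure $n-1\le 2n-4$, so the proof is sharpest exactly where Theorem~\ref{B} gives us no improvement over the naive estimate.
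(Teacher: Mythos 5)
Your proof is correct and is essentially identical to the paper's own argument: the same case split at $\wo=2n-3$, invoking Theorem~\ref{B}(ii) in the first case and the trivial bound $\ws\ge 1$ from \eqref{eq:stern} in the second. Nothing to add.
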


\begin{proof}
    If $\wo>2n-3$ then the stronger bound $\wo-\ws\le n-1\le 2n-4$ for $n\ge 3$ applies by Theorem~\ref{B}, (ii). If otherwise $\wo\le 2n-3$ then
    \eqref{eq:stern} implies
    $\wo-\ws\le \wo-1\le 2n-4$ as well.
\end{proof}

This slightly improves on the bound $2n-2$
that is immediate from Davenport and Schmidt~\cite{davsh}, in fact $2n-3$
for $n\ge 4$ 
follows from aforementioned~\cite[Theorem~1.1]{poels}.
Note that for large $n$ we infer the bound $2n-\sqrt[3]{n}/3-1$ from~\cite[Theorem~1.2]{poels} recalled above as well,
in particular the expression is not of order $2n-O(1)$ as $n\to\infty$. It would be desirable to obtain a bound
of the form $(2-\epsilon)n$ for some explicit $\epsilon>0$.

\section{On the exponents $\kappa(n,\xi,X)$}  \label{se3}

\subsection{On $\overline{\kappa}_n$} \label{s2}

Our next theorem gives another upper estimate for the difference $\wo-\ws$, 
in fact for the quantity $\overline{\kappa}_n(\xi)$.
However another exponent of approximation occurs in the formula.
Write $\lfloor x\rfloor$ for the largest 
integer less or equal than $x$. 

\begin{theorem} \label{A}
    Let $n\ge 2$ be an integer and derive
    \[
    k= \left\lfloor \frac{n}{2}\right\rfloor.
    \]
    Then for any real number $\xi$ we have
    \begin{equation} \label{eq:schwach}
    \wo - \ws \le \max\{ n-1 , w_{k}(\xi)  \}.
    \end{equation}
    In fact the stronger inequality
\begin{align}
\overline{\kappa}_n(\xi) \le \max\{ n-1 , w_{k}(\xi)  \}  \label{eq:aign}
\end{align}
holds.
\end{theorem}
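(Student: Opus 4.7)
My plan is to apply the Feldman-type Lemma~\ref{feld} to the squarefree radical of the best approximation polynomial (rather than to $P_X$ itself), exploiting the fact that every repeated irreducible factor of an integer polynomial of degree at most $n$ has degree at most $k$.

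First I would fix large $X$ and let $P = P_X$. Factor $P = \prod_i Q_i^{e_i}$ into distinct primitive irreducible factors in $\mathbb{Z}[x]$ of degrees $d_i$, and set $\tilde P := \prod_i Q_i$, which is separable of degree $\tilde m = \sum_i d_i \le n$. Writing $\beta_i := \log H(Q_i)/\log X$ and $\sigma_i := -\log|Q_i(\xi)|/\log X$, multiplicativity of the Mahler measure together with Gelfand's inequality yields $\sum_i e_i \beta_i \le 1 + o(1)$, while $|P(\xi)| = \prod_i |Q_i(\xi)|^{e_i}$ gives $-\log|\tilde P(\xi)|/\log X = w(n,\xi,X) - \sum_i (e_i-1)\sigma_i$.

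Now apply Lemma~\ref{feld} to the separable polynomial $\tilde P$: this produces a root $\alpha$ of $\tilde P$, which is also a root of $P$, satisfying $H(\alpha)|\xi - \alpha| \ll_n |\tilde P(\xi)| \cdot H(\tilde P)^{\tilde m - 1}$. Since $\alpha$ competes with $\alpha_X$ up to a harmless constant in the height and $H(\tilde P) \le X^{1+o(1)}$, translating into local exponents gives
\[
\kappa(n,\xi,X) \le \sum_i (e_i - 1)\sigma_i + (\tilde m - 1)\sum_i \beta_i + o(1).
\]
The crucial observation is that whenever $e_i \ge 2$, one has $d_i \le n/e_i \le n/2$, hence $d_i \le k$; together with the definition of $w_{d_i}$ and the monotonicity \eqref{eq:jar}, this yields $\sigma_i \le \beta_i(w_k(\xi) + \epsilon) + o(1)$. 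For $i$ with $e_i = 1$ the summand $(e_i - 1)\sigma_i$ vanishes, so setting $A := \sum_i (e_i-1)\beta_i$ and $B := \sum_i \beta_i$ (so that $A + B \le 1 + o(1)$ by Gelfand), the above bound becomes
\[
\kappa(n,\xi,X) \le (w_k(\xi) + \epsilon) A + (\tilde m - 1) B + o(1).
\]

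A short case split then finishes the argument. If $w_k(\xi) \le \tilde m - 1$ the right-hand side is at most $(\tilde m - 1)(A+B) + \epsilon A + o(1) \le n - 1 + \epsilon + o(1)$; otherwise $w_k(\xi) > \tilde m - 1$ and $(\tilde m - 1)B \le w_k(\xi) B$, so the bound becomes $w_k(\xi)(A+B) + \epsilon A + o(1) \le w_k(\xi) + \epsilon + o(1)$. In either case $\kappa(n,\xi,X) \le \max\{n-1, w_k(\xi)\} + \epsilon + o(1)$, and taking $\limsup$ in $X$ and then $\epsilon \to 0$ yields \eqref{eq:aign}; \eqref{eq:schwach} follows at once from \eqref{eq:ind}. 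The main technical obstacle I anticipate lies in bookkeeping for factors $Q_i$ of bounded height, for which the asymptotic definition of $w_{d_i}(\xi)$ does not apply directly—such factors should contribute only $o(1)$ to both $A$ and the sum $\sum_i (e_i-1)\sigma_i$ since $|Q_i(\xi)|$ is bounded below by a positive constant for transcendental $\xi$, but this must be verified uniformly in $X$.
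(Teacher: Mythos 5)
Your argument is correct, and it takes a genuinely different route from the paper's proof of Theorem~\ref{A}. The paper first invokes Wirsing's Lemma~\ref{wirsing} to extract a single irreducible factor $Q$ of $P_X$ that is nearly as small at $\xi$ as $P_X$ itself, splits into the cases $\deg Q\le k$ (where $w(n,\xi,X)\lesssim w_k(\xi)$ at once) and $\deg Q>k$ (where Feldman is applied to $Q$ alone and the cofactor $R$, of degree $n-\deg Q\le k$, is controlled by $w_k(\xi)$), and then optimizes over the two height ratios $\sigma,\tau$. You instead apply Feldman to the full radical $\tilde P=\prod_i Q_i$ and absorb the repeated-factor deficit $\sum_i(e_i-1)\sigma_i$ using the arithmetic observation that $e_i\ge 2$ forces $d_i\le \lfloor n/2\rfloor=k$, so that each $\sigma_i$ appearing there is at most $\beta_i(w_k(\xi)+\epsilon)$; a convexity-type case split on whether $w_k(\xi)\lessgtr \tilde m-1$ then yields $\max\{n-1,w_k(\xi)\}$. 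Your bound $\kappa(n,\xi,X)\le\sum_i(e_i-1)\beta_i\gamma_i+(\tilde m-1)\sum_i\beta_i+o(1)$ is precisely a (slightly weaker, since $d_i-1\le\tilde m-1$) form of the paper's Lemma~\ref{lemar}, which the paper only deploys for $n\in\{4,5\}$ in Theorem~\ref{H}; in effect you show that Lemma~\ref{lemar} already implies Theorem~\ref{A} in full, with no need for Wirsing's Lemma and its $\epsilon$-loss. Your approach is arguably cleaner and makes the role of $k=\lfloor n/2\rfloor$ transparent, while the paper's single-factor decomposition produces the explicit parametric bound \eqref{eq:tpr} in $\sigma,\tau$ that is reused elsewhere (e.g.\ in Lemma~\ref{sepp}). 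The bounded-height caveat you raise at the end is real but handled exactly as you indicate: for fixed $\epsilon$ the finitely many possible factors of height below the threshold $H_0(\epsilon)$ contribute only $O_{n,\xi,\epsilon}(1/\log X)=o(1)$ to both $\sum_i(e_i-1)\sigma_i$ and $\sum_i(e_i-1)\beta_i$, uniformly in $X$, since $\ell\le n$ and $|Q_i(\xi)|\gg_{\xi,H_0}1$ for transcendental $\xi$.
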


By \eqref{eq:ind}, estimate \eqref{eq:aign}
indeed implies \eqref{eq:schwach}.
The claim shows that the desired estimate \eqref{eq:aaa} holds if $w_k(\xi)\le n-1$.
The classical formula of Bernik~\cite{bernik}
\begin{equation} \label{eq:bernik}
    \dim_H(\{ \xi: w_m(\xi)\ge \lambda\}) = \frac{m+1}{\lambda+1},\qquad m\ge 1,\; \lambda\in [m,\infty],
\end{equation}
where $\dim_H$ denotes Hausdorff dimension, applied for $m=k$
in particular implies 

\begin{corollary} \label{nko}
    Given an integer $n\ge 2$, define the set of counterexamples
    to \eqref{eq:aaa} as
\[
\Omega_n:= \{ \xi\in\mathbb{R}: \wo-\ws>n-1\}
\]
and let similarly
\[
\Gamma_n:= \{ \xi\in\mathbb{R}: \overline{\kappa}_n(\xi)>n-1\}\supseteq \Omega_n.
\]
Then
\[
\dim_H(\Omega_n)\le \dim_H(\Gamma_n)\le \frac{1}{2}+o(1), \qquad n\to\infty.
\]
\end{corollary}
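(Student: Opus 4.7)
The inclusion $\Omega_n\subseteq \Gamma_n$ is already built into the statement and follows immediately from \eqref{eq:ind}: if $\wo-\ws>n-1$ then $\overline{\kappa}_n(\xi)\ge \wo-\ws>n-1$. So the task reduces to bounding $\dim_H(\Gamma_n)$ from above.

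The plan is to use Theorem~\ref{A} to squeeze $\Gamma_n$ inside a set whose Hausdorff dimension is controlled by Bernik's formula \eqref{eq:bernik}. Concretely, if $\xi\in\Gamma_n$ then $\overline{\kappa}_n(\xi)>n-1$, and \eqref{eq:aign} of Theorem~\ref{A} forces $\max\{n-1,w_k(\xi)\}>n-1$, which can only happen if $w_k(\xi)>n-1$. Hence
\[
\Gamma_n\subseteq \bigl\{\xi\in\mathbb{R}:\, w_k(\xi)>n-1\bigr\}\subseteq \bigl\{\xi\in\mathbb{R}:\, w_k(\xi)\ge n-1\bigr\},
\]
where $k=\lfloor n/2\rfloor$.

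Now I would apply \eqref{eq:bernik} with $m=k$ and $\lambda=n-1$. For $n\ge 2$ we have $n-1\ge \lfloor n/2\rfloor =k$, so the range requirement of Bernik's formula is met (for very small $n$ one just has to note that the inequality in the statement is trivial, since Hausdorff dimension is at most $1$). This yields
\[
\dim_H(\Gamma_n)\le \frac{k+1}{(n-1)+1}=\frac{\lfloor n/2\rfloor+1}{n}=\frac{1}{2}+O\!\left(\frac{1}{n}\right),
\]
which is exactly $\tfrac12+o(1)$ as $n\to\infty$, and combining with $\Omega_n\subseteq \Gamma_n$ finishes both bounds.

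There is no real obstacle here: the corollary is a direct book-keeping consequence of Theorem~\ref{A} together with Bernik's dimension formula, and the only mild point to check is that the threshold $\lambda=n-1$ indeed lies in the admissible range $[k,\infty]$ of \eqref{eq:bernik}, which is immediate from $k=\lfloor n/2\rfloor \le n-1$ for $n\ge 2$.
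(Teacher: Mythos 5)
Your proof is correct and follows exactly the route the paper intends: Theorem~\ref{A} reduces membership in $\Gamma_n$ to $w_k(\xi)>n-1$, and Bernik's formula \eqref{eq:bernik} with $m=k=\lfloor n/2\rfloor$ and $\lambda=n-1$ (admissible since $k\le n-1$ for $n\ge 2$) gives $\dim_H(\Gamma_n)\le (k+1)/n=\tfrac12+o(1)$. This matches the paper's (very brief) justification, so nothing further is needed.
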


By Theorem~\ref{H}, the estimates for $\Omega_n$ are only relevant for $n\ge 6$.
%
%
While the bound in Corollary~\ref{nko} 
seems not particularly strong,
it is not clear if there is any other easy argument available that improves on it,
even for $\Omega_n$.
Indeed, it seems no improvement via
Theorem~\ref{B} can be obtained when combining it with known metrical theory for classical exponents, and only (i), (iii) have potential to lead to such an improvement if the metrical
theory is developed well enough (for (iv)
this is excluded for $n\ge 6$ by a metrical result from~\cite{bere}). In this context note that the Hausdorff dimension of counterexamples to (iii) is at least $1/4-o(1)$ as $n\to\infty$, as a consequence 
of~\cite[Theorem~2.5]{equprin} (in fact of its proof) and \eqref{eq:bernik}.

For a reverse positive lower bound for the Hausdorff dimension of $\Gamma_n$, see Corollary~\ref{lkor} below. For $n\in \{2,3\}$ we will improve the bounds resulting from Corollary~\ref{nko} in Corollary~\ref{Kor} below.

\subsection{Counterexamples to stronger versions of \eqref{eq:aaa} }  \label{counter}
The following example shows that a parametric
version of \eqref{eq:aaa} does not hold in general, so that $w_k(\xi)$ cannot be dropped in \eqref{eq:aign} of Theorem~\ref{A}. Recall that a Liouville number
is a transcendental real number satisfying
$w_1(\xi)=\infty$.

\begin{theorem} \label{co}
    Let $n\ge 2$ be an integer and $\xi$ be a transcendental real number. 
    Let $2\le k\le n$ be another integer. If $w_1(\xi)>n+k-1$ then
    \begin{equation}  \label{eq:apply}
        \overline{\kappa}_n(\xi)\ge \left(1-\frac{1}{k}\right)\cdot w_1(\xi).
    \end{equation}
    Thus,
    if $w_1(\xi)>2n-1$, then
    \begin{equation} \label{eq:apply2}
    \overline{\kappa}_n(\xi)\ge \left(1-\frac{1}{n}\right)\cdot w_1(\xi).
    \end{equation}
    In particular, if $\xi$ is a Liouville number then
    \begin{equation} \label{eq:apply3}
    \overline{\kappa}_n(\xi) = \infty.
    \end{equation}
\end{theorem}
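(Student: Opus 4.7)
The plan is to construct, for each near-extremal rational approximation $p/q$ of $\xi$, a parameter $X_q \asymp q^k$ at which $w(n,\xi,X_q)$ is much larger than $w^{\ast}(n,\xi,X_q)$. The polynomial engineered to exploit $p/q$ will be $P(Y):=(qY-p)^k$, which is extremely small at $\xi$, whereas Liouville's inequality will force the nearest algebraic approximant of degree $\leq n$ and height $\leq X_q$ to be the rational $p/q$ itself, whose height is only $\asymp q$. This mismatch between the degree-$k$ polynomial obstruction and the degree-$1$ algebraic approximant is the source of the large $\kappa$.

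First I will fix $v$ with $n+k-1 < v < w_1(\xi)$, which is possible under the hypothesis, and consider infinitely many coprime pairs $(p,q)$ with $|q\xi-p| \leq q^{-v}$; writing $|q\xi-p| = q^{-u_q}$, we have $u_q \geq v$, and along a suitable subsequence $u_q \to w_1(\xi)$. The polynomial $P(Y) := (qY-p)^k$ has degree $k \leq n$ and height $\ll_{n,k} q^k$ by a direct binomial expansion, so setting $X_q \asymp q^k$ makes $P$ an admissible competitor, and from $|P(\xi)| = q^{-u_q k}$ we immediately obtain
\[
w(n,\xi,X_q) \geq \frac{u_q k \log q}{\log X_q} = u_q - o(1), \qquad q\to\infty.
\]

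The main step, and the only substantive point, will be to show $\alpha_{X_q} = p/q$. For any competing algebraic $\alpha \neq p/q$ of degree $d \leq n$ and height $H(\alpha) \leq X_q$, Liouville's inequality applied to the distinct pair $(\alpha, p/q)$ yields
\[
|\alpha - p/q| \geq c_{n,\xi}\, H(\alpha)^{-1} q^{-d} \geq c_{n,\xi}\, X_q^{-1} q^{-n} \gg q^{-n-k},
\]
where the implicit constant is uniform provided $\alpha$ stays bounded (which we may assume, since otherwise $\alpha$ is not competitive with $p/q$). Since $v > n+k-1$ ensures $|\xi - p/q| \leq q^{-v-1} \ll q^{-n-k}$, the triangle inequality forces $|\alpha-\xi| > |\xi - p/q|$ for $q$ large, so $\alpha_{X_q} = p/q$. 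Consequently
\[
w^{\ast}(n,\xi,X_q) = \frac{-\log(H(p/q)\,|\xi-p/q|)}{\log X_q} = \frac{u_q}{k} + o(1),
\]
and therefore $\kappa(n,\xi,X_q) \geq u_q\bigl(1 - 1/k\bigr) - o(1)$.

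Passing to the limsup along the subsequence with $u_q \to w_1(\xi)$ gives \eqref{eq:apply}. The specialization $k = n$ is legitimate whenever $w_1(\xi) > 2n-1 = n+n-1$, producing \eqref{eq:apply2}; for a Liouville $\xi$, $w_1(\xi) = \infty$ automatically, so \eqref{eq:apply2} forces $\overline{\kappa}_n(\xi) = \infty$, which is \eqref{eq:apply3}. The only genuine difficulty is the Liouville-type identification of $\alpha_{X_q}$ with $p/q$: the assumption $w_1(\xi) > n+k-1$ is precisely what makes the Liouville spacing $q^{-n-k}$ dominate the rational approximation error $q^{-v-1}$, so that no algebraic number of higher degree can exploit the larger height budget $X_q$ to outperform the rational $p/q$. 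Any attempt to weaken this hypothesis breaks the identification immediately.
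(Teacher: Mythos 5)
Your proposal is correct and follows essentially the same route as the paper's proof: you take the $k$-th power $(qY-p)^k$ of a near-best linear approximation, work at the parameter $X\asymp q^{k}$, and invoke Liouville's inequality to show that the best algebraic approximant of degree at most $n$ and height at most $X$ is still the rational $p/q$ itself, so that $w^{\ast}(n,\xi,X)\le u_q/k+o(1)$ while $w(n,\xi,X)\ge u_q-o(1)$. This is exactly the construction in the paper (there written with $Q(T)=aT-b$, $R=Q^{k}$ and exponent $\lambda$ in place of your $u_q$), so there is nothing further to compare.
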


The proof is based on 
Liouville's inequality Theorem~\ref{liouville} below on the minimum distance between algebraic numbers.
As another application of Theorem~\ref{co}, we get

\begin{corollary} \label{lkor}
    We have
    \begin{equation} \label{eq:bulow}
    \dim_H(\Gamma_n)\ge \dim_H(\Gamma_n\setminus \Omega_n)\ge \frac{2-o(1)}{n+\sqrt{n}}, \qquad n\to\infty,
\end{equation}
with $\Gamma_n$
defined in Corollary~\ref{nko}. Moreover
we always have $\dim_H(\Gamma_n\setminus \Omega_n)\ge 1/n$.
\end{corollary}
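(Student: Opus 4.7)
The plan is to locate subsets of $\Gamma_n \setminus \Omega_n$ inside a Jarn\'ik--Besicovitch level set $\{\xi : w_1(\xi) > \lambda\}$, whose Hausdorff dimension is supplied by \eqref{eq:bernik} with $m=1$. The two available ingredients dovetail perfectly: for an integer $k$ with $2 \le k \le n$, any $\xi$ with $w_1(\xi) > n+k-1$ automatically satisfies $w_1(\xi) > n$, so Theorem~\ref{co} forces
\[
\overline{\kappa}_n(\xi) \ge \left(1 - \tfrac{1}{k}\right) w_1(\xi),
\]
while Theorem~\ref{B}(v) simultaneously forces $\wo - \ws \le n-1$, i.e.\ $\xi \notin \Omega_n$.

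To push such a $\xi$ into $\Gamma_n$ I require $(1-1/k) w_1(\xi) > n-1$. Since $w_1(\xi) > n+k-1$, a clean sufficient arithmetic condition is $(1-1/k)(n+k-1) \ge n-1$, which rearranges to $k(k-1) \ge n-1$. Denoting $S_k := \{\xi : w_1(\xi) > n+k-1\}$, we then have $S_k \subseteq \Gamma_n \setminus \Omega_n$ for every $k$ meeting this bound, and $\dim_H S_k = 2/(n+k)$ by Jarn\'ik--Besicovitch; the replacement of ``$\ge$'' by ``$>$'' is harmless since $\{w_1 > \lambda\} = \bigcup_{j \ge 1}\{w_1 \ge \lambda + 1/j\}$ has the same Hausdorff dimension.

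Finally I optimise $k$. For the unconditional bound, take $k = n$: the admissibility $n(n-1) \ge n-1$ holds trivially for $n \ge 2$, and $\dim_H S_n = 2/(2n) = 1/n$. For the asymptotic bound, take the smallest admissible $k$, namely $k = \lceil (1+\sqrt{4n-3})/2 \rceil = \sqrt{n} + O(1)$; then $n+k = n + \sqrt{n} + O(1)$, yielding $\dim_H S_k = (2 - o(1))/(n+\sqrt{n})$. I do not anticipate any substantive obstacle: the proof is essentially a bookkeeping balance between the two thresholds supplied by Theorem~\ref{co} and Theorem~\ref{B}(v), together with the Jarn\'ik--Besicovitch dimension formula. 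The only point warranting care is checking that the inequality $\overline{\kappa}_n(\xi) > n-1$ is strict, which it is by the chain $\overline{\kappa}_n(\xi) \ge (1-1/k) w_1(\xi) > (1-1/k)(n+k-1) \ge n-1$.
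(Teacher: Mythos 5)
Your proposal is correct and follows essentially the same route as the paper: apply \eqref{eq:apply} with $k$ the smallest integer satisfying roughly $k(k-1)\ge n-1$ (so $k\sim\sqrt{n}$), use \eqref{eq:bernik} with $m=1$ for the dimension count, and invoke Theorem~\ref{B}(v) to see that the level set avoids $\Omega_n$; the $1/n$ bound via $k=n$ is likewise the paper's use of \eqref{eq:apply2}. No gaps.
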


\begin{proof}
Let 
$k$ be the smallest integer with $k^2-k+1>n$, of order $\sqrt{n}$. Then
the right hand side of \eqref{eq:apply}
exceeds $n-1$ as soon as $w_1(\xi)>n+k-1$, which also justifies 
application of \eqref{eq:apply}.
Then we can further use \eqref{eq:bernik} for $m=1$ and any $\lambda>n+k-1$, which gives the lower bound of \eqref{eq:bulow} for the set $\Gamma_n$. Finally the hypothesis implies \eqref{eq:aaa} for all $\xi$ within our set from \eqref{eq:bernik} by Theorem~\ref{B}, (v), hence
the estimate is preserved when removing $\Omega_n$.
The bound $2/(2n)=1/n$
follows similarly from \eqref{eq:apply2}
with $\lambda=2n-1+\varepsilon$ via \eqref{eq:bernik}.
\end{proof}

 Similarly parametric variants 
 of \eqref{eq:bulow} for estimating
 from below
 \begin{equation} \label{eq:frab}
\dim_H(\{ \xi\in\mathbb{R}: \overline{\kappa}_n(\xi) > \lambda \}), \qquad \lambda\ge \lambda_0(n)
\end{equation}
with large
enough $\lambda_0(n)$ can be obtained. Moreover,
variants of Theorem~\ref{co} involving $w_{\ell}(\xi)$ for any $\ell\le n/2$ can be derived, in particular we have a generalization of \eqref{eq:apply3} of the form
$\overline{\kappa}_n(\xi) = \infty$
as soon as $\xi$ is a $U_{\ell}$-number of index $\ell\le n/2$ in Mahler's classification of real numbers, meaning that $w_{\ell-1}(\xi)<\infty$ and $w_{\ell}(\xi)=\infty$.

As indicated in~\S~\ref{intro}, inseparable polynomials
cause problems when trying to prove
\eqref{eq:aaa}, the deeper reason being that we cannot apply Lemma~\ref{feld} and  Lemma~\ref{sepp} below. This motivates to define the separable local exponent 
$w_{sep}(n,\xi,X)$ like $w(n,\xi,X)$ but restricting to separable
polynomials $P$. Let us derive
the uniform exponent
\[
\widehat{w}_{n,sep}(\xi):=
\liminf_{X\to\infty} w_{sep}(n,\xi,X).
\]
Hence, in order to prove \eqref{eq:aaa},
one may try to settle for any real $\xi$
the identity
\begin{equation} \label{eq:falsch}
\widehat{w}_{n,sep}(\xi)= \wo.
\end{equation}
Indeed, if \eqref{eq:falsch} holds for
some $n,\xi$, then \eqref{eq:aaa} holds
for this pair via 
\begin{align*}
\wo-\ws &= \widehat{w}_{n,sep}(\xi) - \ws =  \liminf_{X\to\infty} w_{sep}(n,\xi,X)-\liminf_{X\to\infty} w^{\ast}(n,\xi,X) \\ &\le \limsup_{X\to\infty} (w_{sep}(n,\xi,X)-w^{\ast}(n,\xi,X)) \le n-1,
\end{align*}
where for the last inequality we used Lemma~\ref{feld} below. For completeness, define similarly
$w_{irr}(n,\xi,X)$ and
$\widehat{w}_{n,irr}(\xi)$
when restricting to irreducible polynomials. It is clear that
\[
w_{irr}(n,\xi,X)\le w_{sep}(n,\xi,X)
\le w(n,\xi,X),
\]
hence for any $n, \xi$ we have
\[
\widehat{w}_{n,irr}(\xi)\le \widehat{w}_{n,sep}(\xi) \le \wo.
\]
We remark that
one may accordingly define ordinary exponents 
$w_{n,irr}(\xi), w_{n,sep}(\xi)$ as well
via upper limits,
however they turn out to be equal to $\w$ by Lemma~\ref{wirsing} below.

However, at least for $n=2$ we can prove
that \eqref{eq:falsch} is in general false as well, so this avenue is not possible.

\begin{theorem}  \label{liou}
Assume $\xi$ satisfies $w_1(\xi) > 3$. Then
\begin{equation} \label{eq:zwe}
\widehat{w}_{2,sep}(\xi) \le 1+\frac{2}{w_1(\xi)-1} < 2 = \widehat{w}_2(\xi). 
\end{equation}
 In particular, for any Liouville number $\xi$ we have
    \[
    \widehat{w}_{2,irr}(\xi) =\widehat{w}_{2,sep}(\xi) = 1.
    \]
\end{theorem}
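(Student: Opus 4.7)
The plan is to handle each link in the chain separately. The rightmost inequality $2\le \widehat{w}_2(\xi)$ is Dirichlet's estimate \eqref{eq:diri}, and the strict middle inequality is just a reformulation of the standing hypothesis $w_1(\xi)>3$. The substantive claim is the leftmost bound $\widehat{w}_{2,sep}(\xi)\le 1+\tfrac{2}{w_1(\xi)-1}$. My strategy is to exhibit an unbounded sequence of parameters $X_j$ along which $w_{sep}(2,\xi,X_j)$ stays below the claimed bound: for any small $\varepsilon>0$, pick infinitely many coprime pairs $(p,q)$ with $d:=|q\xi-p|\le q^{-w_1(\xi)+\varepsilon}$, and then set $X=X(q)\asymp q^{w_1(\xi)-1-\varepsilon}$. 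The main step will be to show that every separable $P\in\mathbb{Z}[x]$ of degree at most $2$ and height at most $X$ satisfies $|P(\xi)|\ge c_\xi\, q^{-w_1(\xi)-1+\varepsilon}$; substituting into $-\log|P(\xi)|/\log X$ and then letting $\varepsilon\to 0$ delivers the claim.

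The polynomial lower bound splits according to whether $p/q$ is a root of $P$. If $P(p/q)\ne 0$, then clearing denominators gives $|P(p/q)|\ge 1/q^2$, and a Taylor expansion around $p/q$ bounds $|P(\xi)-P(p/q)|$ by a constant times $Xq\cdot d\cdot q^{-2}$, which the choice of $X$ makes at most $1/(2q^2)$, yielding $|P(\xi)|\ge 1/(2q^2)$. If instead $P(p/q)=0$, then Gauss's lemma (using $\gcd(p,q)=1$) factors $P(x)=(qx-p)(rx+s)$ in $\mathbb{Z}[x]$, with $|r|,|s|\ll X/q$ forced by the height constraint. Here the separability hypothesis enters crucially: the two roots $p/q$ and $-s/r$ must be distinct, so $|rp+sq|\ge 1$, whence $|p/q-(-s/r)|\ge 1/(|r|q)$. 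Because $|\xi-p/q|=d/q$ is by construction much smaller than $1/(|r|q)$, the triangle inequality gives $|\xi+s/r|\ge 1/(2|r|q)$ and thus $|r\xi+s|\ge 1/(2q)$, so $|P(\xi)|=d\cdot|r\xi+s|\ge d/(2q)$, as required.

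For the Liouville addendum, when $w_1(\xi)=\infty$ the same argument can be run with approximations $d\le q^{-N}$ for $N$ arbitrarily large, and the bound one obtains is $\widehat{w}_{2,sep}(\xi)\le 1+\tfrac{2}{N-1}$; letting $N\to\infty$ forces $\widehat{w}_{2,sep}(\xi)\le 1$. Conversely every primitive integer linear polynomial is simultaneously irreducible and separable, so the standard fact $\widehat{w}_1(\xi)=1$ forces $\widehat{w}_{2,irr}(\xi)\ge\widehat{w}_1(\xi)=1$, and the trivial chain $\widehat{w}_{2,irr}(\xi)\le\widehat{w}_{2,sep}(\xi)$ then makes both quantities equal to $1$.

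The main obstacle I anticipate is the precise calibration of $X$ as a function of $q$: the Taylor error in the ``$P(p/q)\ne0$'' case caps $X$ at roughly $q^{w_1-1}$, whereas the separability argument in the other case could tolerate much larger $X$, so Case~B is the bottleneck. This is precisely why the resulting bound $1+\tfrac{2}{w_1-1}$ falls below the Dirichlet value $2$ only when $w_1>3$. It is instructive to observe that the factorization argument breaks down exactly for the inseparable polynomial $(qx-p)^2$, where $|P(\xi)|=d^2\ll d/q$; this dovetails with the paper's broader theme that inseparable polynomials are the genuine obstruction to \eqref{eq:aaa}.
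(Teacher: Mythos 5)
Your argument is correct, and it reaches the bound $1+\frac{2}{w_1(\xi)-1}$ by a genuinely different route than the paper. The dichotomy is morally the same in both proofs --- given a very good rational approximation $p/q$ (equivalently the best linear approximation $P$), one separately treats degree-$\le 2$ polynomials that vanish at $p/q$ and those that do not --- but the engine differs completely. The paper handles the non-vanishing case by a three-dimensional application of Minkowski's Second Convex Body Theorem to the triple $P^2, PR, V_3$, and the vanishing (separable) case by a two-dimensional Minkowski argument (Lemma~\ref{lemur}) giving $|S(\xi)|\gg H_P^{-1}$ for linear $S$ not proportional to $P$. You replace both geometry-of-numbers steps by elementary integrality estimates anchored at $p/q$: the bound $|P(p/q)|\ge q^{-2}$ plus a mean-value estimate when $P(p/q)\ne 0$, and Gauss's lemma plus $|rp+sq|\ge 1$ (which is exactly where separability enters) when $P(p/q)=0$. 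This is shorter, self-contained, and in the non-vanishing case even yields the stronger lower bound $\gg q^{-2}$ rather than $\gg q^{-w_1-1}$, which is relevant to the paper's remark that a sharper estimate holds for $\widehat{w}_{2,irr}$; what the paper's Minkowski-based template buys instead is a two-sided asymptotic for the second minimum and a method that generalizes more readily to higher degree.

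One small calibration point to tighten: in the vanishing case your lower bound is $|P(\xi)|\ge d/(2q)$, so you need $d$ bounded \emph{below}, not just above. For finite $w_1(\xi)$ this is automatic (the limsup definition forces $d\ge q^{-w_1(\xi)-\varepsilon}$ for all large $q$), but in the Liouville case $d$ may be far smaller than the $q^{-N}$ you prescribe, and then the quotient $\log(1/|P(\xi)|)/\log X$ is not controlled by $N$. The fix is to define the exponent of the chosen convergent by equality, $d=q^{-\lambda}$, and calibrate $X\asymp q^{\lambda-1-\varepsilon}$ accordingly (as the paper does with $|P(\xi)|=H_P^{-\lambda}$); since $\lambda\ge N$ the resulting bound only improves. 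With that adjustment the proof is complete.
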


\begin{remark}
    A slightly stronger estimate for $\widehat{w}_{2,irr}(\xi)$ 
    upon $w_1(\xi)\in (3,\infty)$ can be 
    deduced from the method. We further remark that as shown in~\cite{moscj}, the exponent
    $\widehat{w}_{=n,irr}(\xi)$ (denoted
    just $\widehat{w}_{=n}(\xi)$ in~\cite{moscj}) with 
    polynomials irreducible of degree
    precisely $n$ equals $0$
    for any Liouville number and any $n\ge 2$. The proof is similar to our proof 
    of Theorem~\ref{liou} below, based on Minkowski's Second Convex Body Theorem.
    In exact degree the associated ordinary 
    exponents $w_{=n,irr}(\xi)$ become more interesting as well. In particular, it is open for $n>7$ if it is always at least $n$ (for $n\le 7$
    this was settled in~\cite[Theorem~1.3]{newp}).
\end{remark}

We believe there is equality in the most left inequality of \eqref{eq:zwe}. Denoting $spec(.)\subseteq \mathbb{R}$ the set
of values taken by an exponent when inserting all real numbers,
this would imply
\begin{equation} \label{eq:lere}
spec(\widehat{w}_{2,sep})= [1,2]\cup spec(\widehat{w}_2), \qquad spec(\widehat{w}_{2}-\widehat{w}_{2,sep})=[0,1].
\end{equation}
The spectrum of $\widehat{w}_2$ is known to be contained and dense in $[2,(3+\sqrt{5})/2]$. The inclusion
of $[1,2]$ in the left identity of \eqref{eq:lere} follows directly
from equality in \eqref{eq:zwe}. On the other hand,
if $\widehat{w}_2(\xi)>2$ for some $\xi$ then it follows as in~\cite{moscj}
that all best approximations are irreducible of degree exactly two, which implies
$\widehat{w}_{2,sep}(\xi)=\widehat{w}_2(\xi)$. This argument
shows that the spectrum
of $\widehat{w}_{2,spec}(\xi)$ contains the spectrum of $\widehat{w}_2(\xi)$ as well, more precisely unconditionally we have
\[
spec(\widehat{w}_2)\cap (2,\infty)= spec(\widehat{w}_{2,sep})\cap (2,\infty).
\]
The point $\{2\}$ can be added by considering generic numbers with $w_2(\xi)=2$, see Theorem~\ref{str} below. Hence the left identity  of \eqref{eq:lere} holds upon our assumption.
The right identity of \eqref{eq:lere} uses additionally that $w_1(\xi)\ge 3>2$ implies $\widehat{w}_2(\xi)=2$ by Theorem~\ref{Sch} below.

We further believe that for larger $n$
again Liouville numbers will provide counterexamples to \eqref{eq:falsch}, however the proof will be more involved so we do not attempt to make it rigorous.

\subsection{Some remarks on $\underline{\kappa}_n$ and the limit of $\kappa$}

Let us study
$\underline{\kappa}_n(\xi)$.
By \eqref{eq:frit}, \eqref{eq:ordi}, \eqref{eq:Con} we get
\begin{equation} \label{eq:kk}
0\le \underline{\kappa}_n(\xi)\le \w - \wos \le n-1.
\end{equation}
This motivates the following

\begin{problem}  \label{pp}
    Can we improve the bound $n-1$ 
    for $\underline{\kappa}_n$
    from \eqref{eq:kk} for $n\ge 2$ and
    all $\xi$? Is it in fact always $0$?
\end{problem}

Let us first consider $n=2$. Then, 
complementary to the bound $n-1=1$
in \eqref{eq:kk}, we get
  the following estimates in terms 
    of the uniform exponent.

    \begin{theorem} \label{n=2}
        For any real number $\xi$ we have 
        \begin{equation} \label{eq:EST1}
        \underline{\kappa}_2(\xi) \le \frac{w_2(\xi)}{\widehat{w}_2(\xi)-1} - \widehat{w}_2(\xi)
        \end{equation}
    and
        \begin{equation} \label{eq:EST2}
        \underline{\kappa}_2(\xi) \le \frac{1}{\widehat{w}_2(\xi)-1}.
        \end{equation}
    \end{theorem}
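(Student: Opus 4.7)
My plan is to estimate $w^{\ast}(2,\xi,X)$ from below at a suitable subsequence of parameters $X\to\infty$ by extracting an algebraic approximation from a best polynomial of slightly smaller height, and then to compare with $w(2,\xi,X)$ using the definitions of $\widehat{w}_2(\xi)$ and $w_2(\xi)$.

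First, let $X_1<X_2<\cdots$ denote the successive ``jump points'' of the best approximation polynomial and let $P_j:=P_{X_j}$ have height $H_j\le X_j$ and value $L_j:=|P_j(\xi)|$. When $P_j$ is an irreducible quadratic with closest root $\alpha_j$ to $\xi$, the discriminant of $P_j$ is a nonzero integer, so $|P_j'(\alpha_j)|=|a_j||\alpha_j-\beta_j|\ge 1$, and Taylor expansion yields the standard estimate
\[
|\xi-\alpha_j|\le 2L_j,\qquad H(\alpha_j)\cdot|\xi-\alpha_j|\le 2X_jL_j.
\]
The cases of degree at most $1$, or reducible $P_j$, are disposed of separately and yield $\kappa(2,\xi,X_j)\le 0$ immediately via the linear factor. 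Using $\alpha_{j-1}$ as a candidate algebraic approximation at any parameter $X\ge X_j$ produces
\[
w^{\ast}(2,\xi,X)\ge -\log(2X_{j-1}L_{j-1})/\log X.
\]

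Next, writing $b_j:=\log X_j$ and $a_j:=-\log L_j$, the two exponents impose complementary constraints: the liminf definition of $\widehat{w}_2(\xi)$ forces $a_j\ge \widehat{w}_2(\xi)\,b_{j+1}-o(b_{j+1})$ for all large $j$, whereas the limsup definition of $w_2(\xi)$ gives $a_j\le (w_2(\xi)+o(1))\,b_j$ (at least along a suitable subsequence realising the limsup). I would choose a subsequence of indices $j$ along which $w(2,\xi,X)\to \widehat{w}_2(\xi)$ at some $X\to X_{j+1}^-$, so that $a_j/b_{j+1}\to\widehat{w}_2(\xi)$. Applying the liminf constraint with index $j-1$ in the displayed inequality above, and combining with the limsup constraint to pin down the ratio $b_j/b_{j+1}$, then optimising the resulting expression for $\kappa(2,\xi,X)=w(2,\xi,X)-w^{\ast}(2,\xi,X)$ over the allowed ranges of the ratios $b_{j-1}/b_j$ and $b_j/b_{j+1}$, yields both inequalities of the theorem: the first, sharper bound $w_2(\xi)/(\widehat{w}_2(\xi)-1)-\widehat{w}_2(\xi)$ uses the full limsup constraint on $a_j/b_j$, while the second, cleaner bound $1/(\widehat{w}_2(\xi)-1)$ uses only the liminf constraint together with the trivial range $b_{j-1}/b_j\in[\widehat{w}_2(\xi)^{-1},1]$ arising from the same argument.

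The main difficulty lies in the optimisation step: a direct substitution only yields the trivial $\kappa\le 1+o(1)$, and the sharp bounds require simultaneously matching the subsequence along which the liminf is realised, the complementary limsup constraint on $a_j/b_j$, and a careful tracking of the interdependent ratios of logarithms at two consecutive jump points. A further technical issue is the degenerate configuration where $P_j$ has nearly coincident roots, so that $|P_j'(\alpha_j)|$ is essentially at its lower bound $1$ forced by the integer discriminant and the Taylor estimate above is tight; such configurations must be handled via an auxiliary application of the Dirichlet-type bound $|P_j'(\xi)|\ge H_{P_j'}^{-1-o(1)}$ for the integer linear form $P_j'$, viewed as an approximation datum in its own right.
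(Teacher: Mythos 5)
There are two genuine gaps, and they sit exactly at the two places where the paper has to import nontrivial external results. First, your ``optimisation over the allowed ranges of the ratios'' never gets off the ground, because the constraints you list only bound the ratio of consecutive heights from the wrong side: from $a_j\ge \widehat{w}_2(\xi)b_{j+1}-o(b_{j+1})$ and $a_j\le (w_2(\xi)+o(1))b_j$ you obtain $b_j/b_{j+1}\ge \widehat{w}_2(\xi)/w_2(\xi)-o(1)$, i.e.\ a \emph{lower} bound, whereas the estimate $\kappa(2,\xi,X)\le (b_j+O(1))/\log X$ coming from the root of $P_j$ requires an \emph{upper} bound $b_j/b_{j+1}\le 1/(\widehat{w}_2(\xi)-1)+o(1)$ along a subsequence. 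This upper bound is not a ``trivial range'' (and your stated range $[\widehat{w}_2(\xi)^{-1},1]$ is in any case off by the shift from $\widehat{w}_2$ to $\widehat{w}_2-1$, and is not derivable at all when $w_2(\xi)=\infty$): it is precisely Jarn\'ik's theorem, proved via the determinant of a triple of linearly independent consecutive minimal polynomials, which gives $|P_{j-1}(\xi)|\gg (H_jH_{j+1})^{-1}$ and hence $\log H_{j+1}\ge (\widehat{w}_2(\xi)-1-o(1))\log H_j$ for suitable $j$. The paper cites Jarn\'ik for exactly this step; without it even \eqref{eq:EST2} does not follow.

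Second, for \eqref{eq:EST1} the input $|\xi-\alpha_j|\le 2L_j$ obtained from the integer discriminant (i.e.\ $|P_j'(\alpha_j)|\ge 1$) is structurally too weak: the right-hand side of \eqref{eq:EST1} equals $0$ when $w_2(\xi)=\widehat{w}_2(\xi)^2-\widehat{w}_2(\xi)$ (Jarn\'ik's extremal case), and no bookkeeping of height ratios can turn the Feldman-type estimate into $\kappa\le o(1)$ there. The paper instead invokes Moshchevitin's result that infinitely many best approximations satisfy $|P'(\xi)|\ge |P(\xi)|\cdot H_P^{\widehat{w}_2(\xi)^2-\widehat{w}_2(\xi)+1}$, which upgrades the root estimate to $|\xi-\alpha|\ll H(\alpha)^{-1}H_P^{-(\widehat{w}_2(\xi)^2-\widehat{w}_2(\xi))+o(1)}$; this is the engine behind \eqref{eq:EST1} and has no counterpart in your argument. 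Your proposed remedy for the degenerate (nearly coincident roots) case, a ``Dirichlet-type bound'' $|P_j'(\xi)|\ge H_{P_j'}^{-1-o(1)}$, is false in general: Dirichlet's theorem gives upper, not lower, bounds for linear forms, and for a Liouville number the linear form $P_j'(\xi)$ can be smaller than any fixed negative power of its height. So the framework (best approximation polynomials, their roots, logarithmic ratios) matches the paper's, but both key quantitative inputs are missing and the substitutes offered do not work.
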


    The latter \eqref{eq:EST2}
    is a refinement of \eqref{eq:kk}
    by \eqref{eq:diri}.
    Note that the bound in \eqref{eq:EST1} is always 
    non-negative~\cite{jarnik}. We enclose a few observations on Problem~\ref{pp} for general $n$:

\begin{itemize}
    \item If $\underline{\kappa}_n(\xi)=0$ for all $\xi$, this
    would imply
    Wirsing's problem has an affirmative answer for given $n$. In fact $\underline{\kappa}_n(\xi)=0$ implies
    $\wos\ge \wo\ge n$. While this 
    is audacious to conjecture in general, for $n=2$,
    Theorem~\ref{n=2} and the results from~\cite{die2, mos} it is based on indicate that it may be true.
    \item Constructing $\xi$ with $\underline{\kappa}_n(\xi)>0$ does
    not seem easy either, as {\em all}
    best approximations of large norm must have
    untypically small derivative. Moreover it probably forces $\wo>n$, see Conjecture~\ref{konsch} below.
    \item An analogue of Jarn\'ik's estimate on the logarithmic ratio of best approximation norms,
    as in the proof of Theorem~\ref{n=2} below,
    for higher dimension would 
    generalize Theorem~\ref{n=2}. However, this seems to this date unknown. While
    the results from~\cite{mamo} suggest
    such an effective estimate, 
    it seems not settled. 
    Nevertheless,
    in view of Conjecture~\ref{konsch} below, this suggests
    that $n-1$ can be improved. 
    Moreover,
    it seems that Wirsing's method and results~\cite{wirsing} 
    do not immediately imply anything better than $n-1$ either. For example if $w_n(\xi)=2n-1$ (or larger).    
\end{itemize}

The next metrical result on the limit of $\kappa$ is an easy consequence of a claim from~\cite{buglau}.

\begin{theorem}  \label{str}
    Let $n\ge 2$ be an integer 
    and $\xi$ be a transcendental real number.
    If
    \[
    w_n(\xi)=n
    \]
    then
    \begin{equation} \label{eq:GH}
    \lim_{X\to\infty} \kappa(n,\xi,X)=0.
    \end{equation}
    In particular we have \eqref{eq:GH} for
    almost all $\xi$ with respect to Lebesgue measure.
\end{theorem}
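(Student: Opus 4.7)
The plan is to show that $w(n,\xi,X)$ and $w^{\ast}(n,\xi,X)$ individually both converge to $n$ as $X\to\infty$, from which \eqref{eq:GH} follows by subtraction.

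First I would establish $\lim_{X\to\infty} w(n,\xi,X) = n$. By hypothesis, $\limsup_{X\to\infty} w(n,\xi,X) = \w = n$, while Dirichlet's Theorem \eqref{eq:diri} gives $\liminf_{X\to\infty} w(n,\xi,X) = \wo \ge n$. The sandwich forces the limit to exist and equal $n$. In particular, the uniform exponent $\wo$ is pinned down to be exactly $n$.

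Next I would argue that $\lim_{X\to\infty} w^{\ast}(n,\xi,X) = n$ as well. The upper bound is immediate from \eqref{eq:lordi}: $\wos \le \w = n$, so $\limsup_{X\to\infty} w^{\ast}(n,\xi,X) \le n$. For the matching lower bound, I would invoke the Bugeaud--Laurent inequality \eqref{eq:LE}, which asserts a relation of the form $\ws \ge \wo / (\wo - n + 1)$. Plugging in $\wo = n$ (established in the previous step) gives $\ws \ge n$, i.e.\ $\liminf_{X\to\infty} w^{\ast}(n,\xi,X) \ge n$. Note that the denominator equals $1$ in this degenerate extremal case, so no division issue arises. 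Combined with the upper bound, $w^{\ast}(n,\xi,X) \to n$.

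Subtracting yields $\kappa(n,\xi,X) = w(n,\xi,X) - w^{\ast}(n,\xi,X) \to 0$, which is \eqref{eq:GH}. For the ``in particular'' statement, Sprind\v{z}uk's theorem \cite{sprind} asserts $\w = n$ for Lebesgue-almost every real $\xi$, and our main claim applies to every such $\xi$.

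I do not expect any real obstacle: the argument is mechanical once \eqref{eq:LE} is in hand, and indeed the author's own phrasing (``an easy consequence of a claim from \cite{buglau}'') indicates the proof is short. The only point requiring minor care is to verify that the parametric hypothesis $\w = n$ feeds correctly into both a limsup bound (for $w$) and a liminf bound (for $w^{\ast}$), i.e.\ that the two one-sided estimates can be combined to yield honest limits rather than merely matching extremal values.
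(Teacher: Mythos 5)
Your proof is correct and follows essentially the same route as the paper: both hinge on the Bugeaud--Laurent inequality \eqref{eq:LE} to force $\ws\ge n$, on \eqref{eq:diri} and \eqref{eq:lordi} for the remaining one-sided bounds, and on Sprind\v{z}uk's theorem for the metrical statement, your only cosmetic difference being that you show the two local exponents converge to $n$ individually, whereas the paper bounds $\overline{\kappa}_n(\xi)\le \w-\ws\le 0$ and invokes $\underline{\kappa}_n(\xi)\ge 0$. Note only that \eqref{eq:LE} reads $\ws\ge \w/(\w-n+1)$ with the \emph{ordinary} exponent on the right-hand side (you wrote $\wo$ in its place), but since the hypothesis forces $\wo=\w=n$ this slip is harmless here.
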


\begin{proof}
We use the estimate of Bugeaud and Laurent~\cite[Theorem~2.1]{buglau}
\begin{equation} \label{eq:LE}
\ws \ge \frac{\w}{\w-n+1}.
\end{equation}
It follows that
\begin{align}  \label{eq:Ff}
\overline{\kappa}_n(\xi) &=
\limsup_{X\to\infty} (w(n,\xi,X) - w^{\ast}(n,\xi,X)) \nonumber \\
&\le \limsup_{X\to\infty} w(n,\xi,X) - \liminf_{X\to\infty} w^{\ast}(n,\xi,X)
\nonumber \\ &= w_n(\xi) - \ws \le \w - \frac{\w}{\w-n+1}. 
\end{align}
For $\w=n$ the right hand side vanishes,
thus together with \eqref{eq:frit} we conclude
\[
0\le \underline{\kappa}_n(\xi)\le \overline{\kappa}_n(\xi)\le 0
\]
so the limit is $0$.
Finally for the metrical claim
we conclude with Sprind\v{z}uk's famous result~\cite{sprind}.
\end{proof}

More generally
combining \eqref{eq:Ff} and \eqref{eq:bernik} enables us to estimate from above \eqref{eq:frab}
for $\lambda\ge 0$,
complementary to Corollary~\ref{nko} where
$\lambda=n-1$. For large $\lambda$, only small improvements compared to the bounds obtained via trivially estimating $\ws\ge 1$ in place of \eqref{eq:LE} and using \eqref{eq:bernik}
are achieved. In particular for $\lambda=n-1$ the bound becomes significantly weaker than the one from Corollary~\ref{nko}.

The proof of Theorem~\ref{str} and the dual inequality $\wos\ge \wo/(\wo-n+1)$ to \eqref{eq:LE} established in~\cite{buglau} as well suggest the following may be true.

\begin{conjecture}  \label{konsch}
    We have
\begin{equation} \label{eq:rororo}
    \underline{\kappa}_n(\xi) \le
    \wo- \frac{\wo}{\wo-n+1}.
\end{equation}
In particular
\[
\wo=n
\]
implies
\[
\underline{\kappa}_n(\xi)= 0.
\]
\end{conjecture}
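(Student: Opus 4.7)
The plan is to establish a parametric refinement of the dual of \eqref{eq:LE}, namely the inequality $\wos \ge \wo/(\wo - n + 1)$ from~\cite{buglau}. Concretely, I would try to show that for every sufficiently large $X$,
\begin{equation*}
w^{\ast}(n,\xi,X) \ge \frac{w(n,\xi,X)}{w(n,\xi,X) - n + 1} - o(1).
\end{equation*}
Once this is granted, set $f(t) := t - t/(t-n+1) = t(t-n)/(t-n+1)$, which is continuous and, writing $u = t-n+1$, satisfies $f = u + (n-2) - (n-1)/u$, so $f$ is strictly increasing on $[n,\infty)$. By \eqref{eq:diri} we have $w(n,\xi,X) \ge n - o(1)$, and the parametric inequality rewrites as $\kappa(n,\xi,X) \le f(w(n,\xi,X)) + o(1)$. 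Choosing a subsequence $X_k \to \infty$ along which $w(n,\xi,X_k) \to \wo$ and passing to $\liminf$ then yields
\[
\underline{\kappa}_n(\xi) \le f(\wo) = \wo - \frac{\wo}{\wo - n + 1},
\]
which is \eqref{eq:rororo}. The final assertion is immediate: $\wo = n$ forces $f(\wo) = 0$, and together with \eqref{eq:frit} this pins $\underline{\kappa}_n(\xi) = 0$.

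To prove the parametric inequality I would follow the Bugeaud--Laurent scheme. Let $P_X$ be the best approximation polynomial at parameter $X$, so $H(P_X) \le X$ and $|P_X(\xi)| = X^{-w(n,\xi,X)}$. Assuming $P_X$ is separable, factor $P_X = c\prod Q_i$ into irreducibles and let $\alpha$ be the root of some $Q$ lying closest to $\xi$. Lemma~\ref{feld} then supplies an estimate of the shape $H(\alpha)\,|\xi - \alpha| \le C_n H(P_X)^{n-1}|P_X(\xi)|$, which after unpacking the exponents and optimising in $H(\alpha)$ delivers the target bound. Since $H(\alpha) \le X$, the number $\alpha$ is a valid candidate for $\alpha_X$, so the inequality transfers to $w^{\ast}(n,\xi,X)$ as required.

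The hard part is twofold. First, $P_X$ need not be separable, in which case Lemma~\ref{feld} is unavailable; this is precisely the gap emphasised throughout \S\ref{s1}--\S\ref{counter}, and the counterexamples there, in particular Theorem~\ref{co} and Theorem~\ref{liou}, show that inseparability can genuinely amplify $\kappa(n,\xi,X)$ beyond the naive bound, so the obstruction is substantive. Second, even when $P_X$ is separable, the root $\alpha$ may carry $H(\alpha) \ll X$, so the estimate naturally controls $w^{\ast}$ at a parameter $Y \le X$; translating it back to the parameter $X$ requires a parametric-geometry-of-numbers comparison between the graphs of $w(n,\xi,\cdot)$ and $w^{\ast}(n,\xi,\cdot)$ that does not seem to be available off the shelf. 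Since Conjecture~\ref{konsch} only concerns the liminf, it would in principle suffice to establish the parametric inequality along a liminf-achieving subsequence for $w$, so a plausible line of attack is to argue that along such a subsequence one may after all restrict to separable $P_X$ with $H(\alpha)$ comparable to $X$. Executing this last reduction is, in my view, where the main new input is required, and is the step I expect to be the principal obstacle.
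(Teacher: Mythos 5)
This statement is Conjecture~\ref{konsch}; the paper offers no proof and explicitly says the author ``could not prove it rigorously.'' Your proposal is likewise not a proof: it reduces the conjecture to an unproven parametric inequality
\[
w^{\ast}(n,\xi,X) \ \ge\ \frac{w(n,\xi,X)}{w(n,\xi,X)-n+1}-o(1)
\]
(at least along a subsequence with $w(n,\xi,X_k)\to\wo$), and you candidly acknowledge that you cannot establish it. The surrounding bookkeeping is fine --- the function $f(t)=t-t/(t-n+1)$ is indeed increasing on $[n,\infty)$, and passing to a liminf-achieving subsequence for $w$ correctly converts the parametric bound into \eqref{eq:rororo} --- but the core step is exactly the open content of the conjecture, so nothing has been proved.

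Two remarks on why the missing step is genuinely hard and not a routine adaptation of \cite{buglau}. First, Theorem~\ref{co} shows that for Liouville numbers $\overline{\kappa}_n(\xi)=\infty$ while $\wo=n$ by \eqref{eq:holdss}; hence the parametric inequality you want \emph{fails for infinitely many $X$} in general, and any proof must make an essential subsequence selection compatible with $w(n,\xi,X)\to\wo$ --- precisely the selection you cannot carry out when $P_X$ is inseparable. Second, even for separable $P_X$, Feldman's Lemma~\ref{feld} combined with $H(\alpha)\ll H_{P_X}\le X$ only yields $w^{\ast}(n,\xi,X)\ge w(n,\xi,X)-(n-1)$, i.e.\ the bound of Lemma~\ref{sepp}; the stronger quotient $\w/(\w-n+1)$ in \eqref{eq:LE} comes from the Bugeaud--Laurent argument playing the heights of \emph{two different} best approximations against each other, which is intrinsically a $\liminf$/$\limsup$ statement and does not obviously localize to a single parameter $X$. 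So the ``unpacking and optimising in $H(\alpha)$'' step of your sketch would not deliver the target exponent as written. In short: your outline identifies the right obstacles (they coincide with the ones the paper itself highlights), but the statement remains a conjecture.
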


However, the conclusion even for the special latter claim is not as straightforward as for Theorem~\ref{str},
and we could not prove it rigorously.
Note also that $\wo=n$ does not imply
the stronger claim $\wo-\ws=0$, as any Liouville number $\xi$ satisfies \eqref{eq:holdss}. Combining \eqref{eq:EST2} with our unproved
estimate \eqref{eq:rororo} for $n=2$ gives a conditional bound
\[
\underline{\kappa}_2(\xi) \le \frac{1}{\sqrt{2}}.
\]

\section{Preparatory lemmas}

We need some lemmas. The first is essentially
due to Wirsing~\cite[Hilfssatz 4]{wirsing}.

\begin{lemma}[Wirsing]  \label{wirsing}
    Let $\xi$ be a transcendental real number and $n\ge 1$. Let $\epsilon>0$. Let 
    $P$ be an integer polynomial of degree at most $n$ and
    large enough height, and assume for some $w>0$ we have 
    \[
    |P(\xi)|\le H_P^{-w}.
    \]
    Then there exists an irreducible divisor $Q$ of $P$ such that
    \[
    |Q(\xi)|\le H_Q^{-w+\epsilon}.
    \]
    We can assume $H_Q\to\infty$ as $H_P\to\infty$.
\end{lemma}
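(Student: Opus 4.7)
\emph{Proof plan.} The plan is a standard contradiction argument based on the multiplicativity of the Mahler measure (Gelfond's lemma). First I would reduce to the case that $P$ is primitive: if $P = cP'$ with $P'$ primitive and $c\in\mathbb{Z}\setminus\{0\}$, then $H_P=|c|H_{P'}$ and $|P(\xi)|=|c|\cdot|P'(\xi)|$, so for $w>0$ the hypothesis $|P(\xi)|\le H_P^{-w}$ passes to $|P'(\xi)|\le H_{P'}^{-w}$, and the primitive irreducible $\mathbb{Z}$-divisors of $P$ coincide with those of $P'$. So I may factor $P = \pm Q_1^{e_1}\cdots Q_r^{e_r}$ in $\mathbb{Z}[x]$ with each $Q_i$ primitive irreducible, $\sum_i e_i\deg Q_i\le n$, and $|P(\xi)|=\prod_i|Q_i(\xi)|^{e_i}$.

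Next I would assume for contradiction that $|Q_i(\xi)|>H_{Q_i}^{-w+\epsilon}$ holds for \emph{every} $i$. Taking the product and using multiplicativity yields
\[
|P(\xi)| > \prod_{i=1}^r H_{Q_i}^{-e_i(w-\epsilon)}.
\]
At this point I would invoke Gelfond's lemma, which supplies a constant $C_n>0$ depending only on $n$ such that
\[
\prod_{i=1}^r H_{Q_i}^{e_i} \le C_n\, H_P.
\]
Inverting this inequality and raising it to the power $w-\epsilon$, then combining with the previous display, produces $|P(\xi)| > C_n^{-(w-\epsilon)} H_P^{-(w-\epsilon)}$. Comparing with the hypothesis $|P(\xi)|\le H_P^{-w}$ forces
\[
H_P^{\epsilon} < C_n^{\,w-\epsilon},
\]
which contradicts the assumption that $H_P$ is large enough in terms of $n,w,\epsilon$. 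Hence at least one irreducible factor $Q=Q_i$ satisfies the advertised bound $|Q(\xi)|\le H_Q^{-w+\epsilon}$.

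I expect no genuine obstacle: the argument is essentially mechanical once Gelfond's lemma is available. The one delicate point is the bookkeeping around $C_n$, which must depend only on $n$ (and not on the number of factors, their multiplicities, or on $w$); this is handled in standard fashion via the two-sided comparison $2^{-\deg Q}H_Q\le M(Q)\le\sqrt{\deg Q+1}\,H_Q$ between naive height and Mahler measure applied to each $Q_i$ and to $P$. Irreducible factors of bounded (even unit) height contribute trivially to both sides of the above estimates and cause no trouble, so the full conclusion follows.
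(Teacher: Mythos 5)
Your proof is correct and follows exactly the route the paper has in mind: the paper gives no proof of this lemma (it cites Wirsing's Hilfssatz 4) but explicitly identifies Gelfond's Lemma as "the essence" of the argument, which is precisely the multiplicativity-plus-contradiction scheme you carry out. The only caveat worth noting is that your height threshold depends on $w$ as well as on $n$ and $\epsilon$, which is consistent with the lemma as stated.
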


We note that this easily implies \eqref{eq:ordi}.
The next is part of Lemma~A8 in Bugeaud's book~\cite{bugbuch} attributed to Feldman.

\begin{lemma}[Feldman] \label{feld}
    If $P$ is a separable integer polynomial of degree at most $n$ then it has a root $\alpha$ such that
    \[
     |\xi-\alpha| \ll_n |P(\xi)|\cdot H_P^{n-2}.
    \]
\end{lemma}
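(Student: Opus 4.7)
The plan is the classical Feldman-type strategy: take $\alpha$ to be the root of $P$ nearest $\xi$ and split the proof into an upper bound $|\xi-\alpha|\ll_n |P(\xi)|/|P'(\alpha)|$ together with a lower bound $|P'(\alpha)|\gg_n H_P^{-(n-2)}$ that exploits the separability of $P$. Concretely, after factoring $P(x) = a_n\prod_{i=1}^d(x-\alpha_i)$ with $d = \deg P \le n$ and all $\alpha_i\in\mathbb{C}$ pairwise distinct (by separability), I would pick $\alpha = \alpha_j$ minimising $|\xi-\alpha_i|$ over $i$. Since $|\xi-\alpha_i|\ge|\xi-\alpha|$ for every $i\ne j$, the triangle inequality gives $|\alpha-\alpha_i|\le 2|\xi-\alpha_i|$, and therefore
\[
|P(\xi)| \;=\; |a_n|\,|\xi-\alpha|\prod_{i\ne j}|\xi-\alpha_i| \;\ge\; 2^{-(d-1)}\,|\xi-\alpha|\cdot|P'(\alpha)|,
\]
which rearranges to $|\xi-\alpha|\le 2^{n-1}|P(\xi)|/|P'(\alpha)|$.

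For the lower bound on $|P'(\alpha)|$ I would use separability through the fact that $\mathrm{disc}(P)$ is a nonzero integer, so $|\mathrm{disc}(P)|\ge 1$. Taking a square root of the standard formula $\mathrm{disc}(P)=a_n^{2d-2}\prod_{i<k}(\alpha_i-\alpha_k)^2$ after isolating the $d-1$ pairs involving the index $j$ yields
\[
|P'(\alpha)|\cdot|a_n|^{d-2}\!\!\prod_{\substack{i<k\\ i,k\ne j}}\!\!|\alpha_i-\alpha_k|\;=\;\sqrt{|\mathrm{disc}(P)|}\;\ge\;1.
\]
I would then bound each $|\alpha_i-\alpha_k|\le 2\max(1,|\alpha_i|)\max(1,|\alpha_k|)$ and use the identity $\prod_{i\ne j}\max(1,|\alpha_i|) = M(P)/(|a_n|\max(1,|\alpha|))$ together with $M(P)\le\sqrt{d+1}\,H_P$. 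Noting that every index $i\ne j$ appears in exactly $d-2$ of the $\binom{d-1}{2}$ remaining pairs, a routine count delivers $|P'(\alpha)|\gg_n H_P^{-(d-2)}\ge H_P^{-(n-2)}$. Combined with the first step this gives $|\xi-\alpha|\ll_n |P(\xi)|\,H_P^{n-2}$.

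The main obstacle is calibrating the exponent in the lower bound on $|P'(\alpha)|$. A naive approach, using only $\prod_i|P'(\alpha_i)|=|\mathrm{disc}(P)|/|a_n|^{d-2}\ge|a_n|^{2-d}$ together with individual upper bounds $|P'(\alpha_i)|\ll_n M(P)\max(1,|\alpha_i|)^{d-2}$ for $i\ne j$, leads only to $|P'(\alpha)|\gg_n H_P^{-(2d-3)}$, too weak by roughly a factor $H_P^{d-1}$. The square-root form of the discriminant identity is what restores the correct exponent, because it isolates a single $|P'(\alpha_j)|$ together with only $\binom{d-1}{2}$ root differences, and the Mahler-measure bound on those $\binom{d-1}{2}$ factors produces $H_P^{d-2}$ rather than $H_P^{2d-3}$.
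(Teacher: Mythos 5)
Your proof is correct. Note that the paper does not prove this lemma at all: it is quoted as part of Lemma~A.8 of Bugeaud's book, attributed to Feldman, so there is no in-paper argument to compare against. What you have written is essentially the standard proof of that cited result: the nearest-root estimate $|\xi-\alpha|\le 2^{d-1}|P(\xi)|/|P'(\alpha)|$ combined with a discriminant lower bound for $|P'(\alpha)|$. Two points you handle correctly that are easy to get wrong: first, as you observe, one must use the ``square-root'' factorisation of $\operatorname{disc}(P)$ that isolates the single factor $|P'(\alpha_j)|$ and only the $\binom{d-1}{2}$ remaining root differences, since the naive product over all $i$ loses a factor of roughly $H_P^{d-1}$; second, the argument needs $|\operatorname{disc}(P)|\ge 1$ for the discriminant itself (an integer because $a_d$ divides $\operatorname{Res}(P,P')$), not merely $|\operatorname{Res}(P,P')|\ge 1$, which after taking square roots would cost an extra $H_P^{1/2}$. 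The only cosmetic blemish is the degenerate case $d\le 1$, where your intermediate claim $|P'(\alpha)|\gg H_P^{-(d-2)}$ fails as stated (for $P=aT+b$ one only has $|P'(\alpha)|=|a|\ge 1$), but the final bound $|\xi-\alpha|\ll_n|P(\xi)|H_P^{n-2}$ still follows trivially for $n\ge 2$, which is the only range in which the paper uses the lemma.
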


Thus
without restriction to separable polynomials, the claim \eqref{eq:aaa} would be obvious. The next well-known claim is the essence in the proof of Lemma~\ref{wirsing} above.

\begin{lemma}[Gelfond's Lemma]  \label{gel}
    For any polynomials with $P=QR$
    of degree at most $n$ we have
    \[
    H_P \asymp_n H_QH_R.
    \]
\end{lemma}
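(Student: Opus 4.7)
The plan is to prove $H_P\asymp_n H_Q H_R$ by handling the two directions separately.

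The upper bound $H_P\ll_n H_Q H_R$ is entirely elementary: writing $Q(x)=\sum_i q_i x^i$ and $R(x)=\sum_j r_j x^j$, each coefficient of $P=QR$ is a sum of at most $n+1$ products $q_i r_j$, so directly $H_P\le (n+1)\,H_Q H_R$.

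For the substantive lower bound $H_Q H_R\ll_n H_P$, my approach is to pass through the Mahler measure
\[
M(P)=|a_d|\prod_{i=1}^{d}\max\{1,|\alpha_i|\},\qquad P(x)=a_d\prod_{i=1}^{d}(x-\alpha_i),
\]
and exploit two classical facts. First, multiplicativity $M(QR)=M(Q)M(R)$, which is immediate from the definition since the roots of $QR$ are the union of those of $Q$ and $R$ and the leading coefficients simply multiply. Second, the two-sided comparability of Mahler measure and naive height for a polynomial of degree $d$, in the form $2^{-d}H\le M\le \sqrt{d+1}\,H$. The upper inequality follows from Jensen's formula $\log M(P)=\int_0^1\log|P(e^{2\pi i t})|\,dt$ combined with concavity of $\log$, Cauchy--Schwarz, and Parseval's identity $\int_0^1 |P(e^{2\pi i t})|^2\,dt=\sum|a_i|^2$. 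The lower inequality comes from expressing the coefficients via elementary symmetric polynomials of the roots (Vieta's formulas), which yields $|a_k|\le\binom{d}{k}M(P)\le 2^d M(P)$.

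Combining these ingredients gives
\[
H_Q H_R\le 2^{\deg Q+\deg R}M(Q)M(R)=2^{\deg P}M(P)\le 2^n\sqrt{n+1}\,H_P,
\]
with constants depending only on $n$, as required. The only mildly nontrivial input is the Mahler-measure/height comparison, which is a well-known classical computation that I would simply cite from a standard reference such as Bugeaud's book~\cite{bugbuch}. No genuine obstacle arises, Gelfond's lemma being an entirely standard preliminary.
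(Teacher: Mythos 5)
Your proof is correct: the elementary coefficient bound gives $H_P\ll_n H_QH_R$, and the chain $H_QH_R\le 2^{\deg P}M(Q)M(R)=2^{\deg P}M(P)\le 2^n\sqrt{n+1}\,H_P$ via multiplicativity of the Mahler measure and the standard two-sided comparison (Landau's inequality and the Vieta/binomial bound) is exactly the classical argument. The paper states Gelfond's Lemma without proof as a well-known fact, so there is nothing to compare against; your write-up supplies the standard proof one would cite from~\cite{bugbuch}.
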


We state Liouville's inequality, see~\cite[Corollary~A2]{bugbuch}.

\begin{theorem}[Liouville inequality] \label{liouville}
    If $\alpha$ is real algebraic of degree $m$ and $\beta$ is real algebraic of degree $n$ and
    they have different minimal polynomials then
    \[
    |\alpha-\beta| \gg_{m,n} H(\alpha)^{-n} H(\beta)^{-m}.
    \]
\end{theorem}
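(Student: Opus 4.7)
The plan is to derive Liouville's inequality from the fact that the resultant of two coprime integer polynomials is a nonzero integer. Let $P, Q \in \mathbb{Z}[X]$ denote the primitive minimal polynomials of $\alpha$ and $\beta$, of respective degrees $m, n$ and heights $H(\alpha), H(\beta)$. Since $P \neq Q$ are irreducible over $\mathbb{Q}$, they are coprime in $\mathbb{Q}[X]$, so the resultant $R := \mathrm{Res}(P,Q)$ lies in $\mathbb{Z} \setminus \{0\}$, giving $|R| \geq 1$. The proof then reduces entirely to bounding $|R|$ from above in terms of $|\alpha - \beta|$ and the two heights.

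For the upper bound I would invoke the product formula
\[
|R| = |a_P|^n |b_Q|^m \prod_{i=1}^{m} \prod_{j=1}^{n} |\alpha_i - \beta_j|,
\]
where $a_P, b_Q$ are the leading coefficients and $\alpha = \alpha_1, \ldots, \alpha_m$ and $\beta = \beta_1, \ldots, \beta_n$ are the complex roots of $P$ and $Q$. The key move is to single out the factor $(i,j) = (1,1)$, contributing precisely $|\alpha - \beta|$, and to control the remaining $mn - 1$ factors via the trivial bound $|\alpha_i - \beta_j| \leq 2 \max(1, |\alpha_i|) \max(1, |\beta_j|)$. Enlarging the product of these bounds back to all pairs $(i,j)$, one recognises it, up to the constant $2^{mn-1}$, as
\[
|a_P|^n |b_Q|^m \prod_{i} \max(1, |\alpha_i|)^n \prod_{j} \max(1, |\beta_j|)^m = M(P)^n \, M(Q)^m,
\]
where $M(\cdot)$ denotes Mahler measure.

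The final ingredient is the classical Mahler--Landau inequality $M(P) \leq \sqrt{m+1}\, H(\alpha)$ together with its analog for $Q$, which converts Mahler measures into the heights appearing in the statement. Combining all of this yields
\[
1 \leq |R| \ll_{m,n} H(\alpha)^{n} H(\beta)^{m} \cdot |\alpha - \beta|,
\]
which rearranges to the claimed inequality. No real obstacle arises; the only step requiring mild care is the bookkeeping when the factor $|\alpha - \beta|$ is separated from the resultant product and one must argue that enlarging the remaining product to all index pairs loses at most a factor depending on $m, n$. All implied constants depend solely on $m, n$, as asserted.
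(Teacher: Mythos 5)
Your argument is correct and complete: distinctness of the (irreducible) minimal polynomials gives coprimality, hence a nonzero integer resultant, and the chain $1\le |R|\le 2^{mn-1}M(P)^nM(Q)^m|\alpha-\beta|\ll_{m,n}H(\alpha)^nH(\beta)^m|\alpha-\beta|$ goes through exactly as you describe (the omitted factor $\max(1,|\alpha_1|)\max(1,|\beta_1|)\ge 1$, so enlarging the product is harmless, and Landau's bound $M(P)\le\sqrt{m+1}\,H(P)$ closes the argument). The paper does not prove this statement but cites it as Corollary~A2 of Bugeaud's book, whose proof is precisely this resultant computation, so your route coincides with the standard one being invoked.
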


 The next is~\cite[Theorem~2.3]{buschlei}.

\begin{theorem}[Bugeaud, Schleischitz] \label{BuS}
    For any positive integers $m,n$ and any real transcendental number $\xi$, we have
    \[
    \min\{ w_m(\xi), \wo\} \le m+n-1.
    \]
\end{theorem}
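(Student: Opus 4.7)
My plan is to argue by contradiction: I would assume both $w_m(\xi)>m+n-1$ and $\widehat{w}_n(\xi)>m+n-1$, pick $\delta>0$ with each exponent strictly exceeding $m+n-1+\delta$, and derive a contradiction via the resultant of two carefully chosen integer polynomials.

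First, I would use the assumption $w_m(\xi)>m+n-1+\delta$ together with Lemma~\ref{wirsing} to extract an irreducible integer polynomial $Q$ of degree at most $m$, with height $H=H_Q$ as large as desired, satisfying $|Q(\xi)|\le H^{-(m+n-1+\delta)+\epsilon}$ for $\epsilon$ arbitrarily small. Next I would fix a small parameter $\eta\in(0,\delta/m)$ and set $X=H^{1+\eta}$; since $\widehat{w}_n(\xi)>m+n-1+\delta$, for $H$ large enough there exists a primitive integer polynomial $P$ of degree at most $n$ and height $H_P\le X$ with $|P(\xi)|\le X^{-(m+n-1+\delta)+\epsilon}$.

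The key analytic tool would be the classical resultant estimate
\[
|\mathrm{Res}(P,Q)|\ll_{m,n}H_P^{m-1}H_Q^{n}\,|P(\xi)|+H_P^{m}H_Q^{n-1}\,|Q(\xi)|,
\]
obtainable by row and column manipulations on the Sylvester matrix that isolate $P(\xi)$ and $Q(\xi)$ from the bulk of the coefficients. If $P$ and $Q$ are coprime, then $\mathrm{Res}(P,Q)$ is a nonzero integer, so $|\mathrm{Res}(P,Q)|\ge 1$. Substituting $H_P\le X=H^{1+\eta}$, $H_Q=H$ and the value bounds would yield
\[
1\ll H^{-\delta-\eta(n+\delta)+O(\epsilon)}+H^{m\eta-\delta+O(\epsilon)}.
\]
For $\eta\in(0,\delta/m)$ and $\epsilon$ sufficiently small, both exponents are strictly negative, so the right-hand side tends to $0$ as $H\to\infty$, the desired contradiction.

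The most delicate point, and where I expect the main obstacle, is securing coprimality of $P$ and $Q$. Since $Q$ is irreducible, $\gcd(P,Q)\ne 1$ forces $Q\mid P$. For typical situations, applying Lemma~\ref{wirsing} to $P$ as well (passing to an irreducible factor with comparable quality) and using that $|Q(\xi)|$ is a fixed positive number, one can push $X$ past the threshold $|Q(\xi)|^{-1/(m+n-1+\delta-\epsilon)}$ where $P=\pm Q$ would become incompatible with the uniform bound; an easy check shows the window $\eta\in(0,\delta/m)$ is compatible with this whenever $|Q(\xi)|$ is close to its size guarantee. The genuine difficulty appears when $|Q(\xi)|$ is atypically small compared to $H^{-(m+n-1+\delta)}$, so that every good $P_X$ may carry $Q$ as a factor. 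To handle this I would either invoke a Minkowski-type lattice count showing that the good degree-$\le n$ approximants of height $\le X$ cannot all lie in the codimension-$\deg Q$ sublattice $Q\cdot\mathbb{Z}[x]_{\le n-\deg Q}$, or else iterate, dividing out $Q$ and reducing to a lower-degree instance of the problem for the quotient $P/Q$. Once coprimality is secured, the resultant calculation finishes the argument cleanly.
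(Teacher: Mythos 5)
First, note that the paper does not prove Theorem~\ref{BuS} at all: it is quoted from~\cite[Theorem~2.3]{buschlei}, so there is no internal argument to compare yours against. Judged on its own, your resultant framework is the natural starting point and the bookkeeping is correct: the estimate $|\mathrm{Res}(P,Q)|\ll_{m,n}H_P^{m-1}H_Q^{n}|P(\xi)|+H_P^{m}H_Q^{n-1}|Q(\xi)|$ is standard, and with $H_Q=H$, $H_P\le X=H^{1+\eta}$ and $\eta\in(0,\delta/m)$ both exponents on the right are indeed negative, so coprime $P,Q$ would yield the contradiction.

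The genuine gap is the coprimality step, which you correctly flag but do not close; unfortunately it is the entire difficulty of the theorem, not a technicality. The hard case is exactly when $|Q(\xi)|$ is far smaller than the guaranteed $H^{-(m+n-1+\delta)}$ --- for instance $w_m(\xi)=\infty$, i.e.\ $\xi$ a $U_\mu$-number with $\mu\le m$. Then $Q$ itself (or $T^jQ$) is a competitor of degree at most $n$ and height at most $X$ beating $X^{-(m+n-1+\delta)}$ by a huge margin, and the single polynomial $P_X$ supplied by the uniform hypothesis may perfectly well be a multiple of $Q$ for \emph{every} $X$ in your window $[H,H^{1+\eta}]$. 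Neither of your two fixes works as stated. The ``Minkowski-type lattice count'' has nothing to count: the uniform exponent yields only one good polynomial per parameter, and even the linear independence of consecutive best approximations does not force one of them outside $Q\cdot\mathbb{Z}[T]_{\le n-\deg Q}$, a sublattice of rank $n-\deg Q+1\ge 2$ whenever $\deg Q<n$. The iteration $P=QR$ founders because $|R(\xi)|=|P(\xi)|/|Q(\xi)|$ cannot be bounded above without a lower bound on $|Q(\xi)|$, nor below without finiteness of $w_{n-\deg Q}(\xi)$, and neither is available. This is precisely the phenomenon the surrounding paper is about: compare Lemma~\ref{sepp} and the case analyses in the proofs of Theorems~\ref{H} and~\ref{B}, where the divisible case always requires a separate substantive input (there, Theorems~\ref{BuS} and~\ref{Sch} themselves). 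To complete your argument you would need such an input, e.g.\ the treatment in~\cite{buschlei} or~\cite{mathematika} of the case where all best approximations in the relevant range are divisible by $Q$.
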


The next  
is combination 
of~\cite[Theorem~1.12]{mathematika} and~\cite[Theorem~1.6]{mathematika}.

\begin{theorem}[Schleischitz] \label{Sch}
Let $n\ge 1$ be an integer and $\xi$ be a transcendental real number. 
    If $w_1(\xi)\ge n$ then $\wo=n$.
    If $\la>1$ then $w_1(\xi)> 2n-1\ge n$, so again $\wo=n$.
\end{theorem}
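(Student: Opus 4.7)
The plan is to prove the two implications separately; the second reduces to the first via the trivial inequality $2n-1\ge n$. Since $\wo\ge n$ is given by Dirichlet \eqref{eq:diri}, only the upper bound $\wo\le n$ under the hypothesis $w_1(\xi)\ge n$ requires work.

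I would argue this by contradiction. Assume $\wo\ge n+\eta$ for some $\eta>0$; then for every large $X$ there is a nonzero $P_X\in\mathbb{Z}[x]$ of degree $\le n$ and height $\le X$ with $|P_X(\xi)|\le X^{-n-\eta}$. Fix $\epsilon\in(0,\eta/(n+\eta))$ and use $w_1(\xi)\ge n$ to produce infinitely many coprime pairs $(p,q)$ with $|q\xi-p|\le q^{-n+\epsilon}$. For each such $q$ I set $X:=q^\alpha$ where $\alpha$ is fixed in the nonempty open interval $(n/(n+\eta),\,1-\epsilon)$. The mean value theorem combined with the crude bound $|P_X'(\zeta)|\ll_{n,\xi}X$ yields
\[
|P_X(p/q)|\le|P_X(\xi)|+|P_X'(\zeta)|\,|\xi-p/q|\ll q^{-\alpha(n+\eta)}+q^{\alpha-n-1+\epsilon},
\]
and the choice of $\alpha$ makes both terms on the right $o(q^{-n})$. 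Since $q^nP_X(p/q)\in\mathbb{Z}$, it follows that $P_X(p/q)=0$. Gauss's lemma then provides a factorization $P_X(x)=(qx-p)Q(x)$ in $\mathbb{Z}[x]$, to which Lemma~\ref{gel} applies to give $H(Q)\ll_n H_{P_X}/q\le q^{\alpha-1}\to 0$. Hence $Q=0$ for all large $q$, contradicting $P_X\ne 0$.

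For the second implication $\lambda_n(\xi)>1\Rightarrow w_1(\xi)>2n-1$, I fix $\lambda\in(1,\lambda_n(\xi))$ and take $q$ from the infinite sequence of integers for which there exist $p_1,\ldots,p_n\in\mathbb{Z}$ with $|q\xi^j-p_j|\le q^{-\lambda}$ for all $1\le j\le n$. Writing $\epsilon_j:=q\xi^j-p_j$, a direct expansion gives
\[
qp_j-p_1p_{j-1}=-q\epsilon_j+q\xi\epsilon_{j-1}+q\xi^{j-1}\epsilon_1-\epsilon_1\epsilon_{j-1},
\]
whose modulus is $\ll_{\xi,n}q^{1-\lambda}<1$ for $q$ large. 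As the left side is an integer, it vanishes, so $qp_j=p_1p_{j-1}$ for all $2\le j\le n$, and an immediate induction yields $q^{j-1}p_j=p_1^j$. Writing $p_1/q=u/v$ in lowest terms with $p_1=du$, $q=dv$, $\gcd(u,v)=1$, the integrality of $p_n=du^n/v^{n-1}$ forces $v^{n-1}\mid d$, hence $q\ge v^n$. Combining $|v\xi-u|\le vq^{-\lambda-1}$ with $v\le q^{1/n}$ gives $|v\xi-u|\le v^{-(n\lambda+n-1)}$, whence $w_1(\xi)\ge n\lambda+n-1>2n-1$ since $\lambda>1$.

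The principal obstacle I foresee is the parameter bookkeeping in the first implication: one must arrange $\eta,\epsilon,\alpha$ so that the Taylor and the height bounds simultaneously lie below their respective thresholds, which is a tight three-way squeeze. The second implication is cleaner algebraically once the identities $qp_j=p_1p_{j-1}$ are isolated, but one must control the error uniformly in $j\le n$ and upgrade divisibility from $\mathbb{Q}[x]$ to $\mathbb{Z}[x]$ via Gauss's lemma, taking care with $\gcd(p_1,q)$.
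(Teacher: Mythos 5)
The paper does not prove this statement itself; it is quoted as a combination of Theorems 1.6 and 1.12 of the cited Mathematika paper. Your reconstruction is correct and follows the same standard lines as those proofs: the contradiction via forcing $P_X(p/q)=0$ and then a too-small cofactor height for the first implication, and the geometric-progression identity $q^{j-1}p_j=p_1^j$ for the second. The only points worth tightening are routine: replace $|P_X(\xi)|\le X^{-n-\eta}$ by $X^{-n-\eta+o(1)}$ (the $\liminf$ only gives this, which your fixed $\alpha>n/(n+\eta)$ absorbs), and note that the denominators $v$ obtained in the second part must be unbounded (otherwise $\xi$ would be rational), so that one genuinely gets infinitely many linear polynomials witnessing $w_1(\xi)\ge n\lambda+n-1$.
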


For completeness, 
  we end this section with a well known fact although not explicitly used in proofs below, see for example~\cite[Proposition~2.7]{newp} for a proof.

    \begin{proposition}  \label{wellk}
        For any integer polynomial $P$ 
        of degree at most $n$, any of its roots $\alpha$ satisfies
        \[
        |\xi-\alpha|\gg_{n,\xi} H(\alpha)^{-1}|P(\xi)|. 
        \]
    \end{proposition}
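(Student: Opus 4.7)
The plan is to prove the inequality by factoring $P$ over $\mathbb{C}$, isolating the factor $(\xi-\alpha)$, and controlling the remaining product via the Mahler measure. A preliminary remark is in order: the bound as written with $H(\alpha)^{-1}$ can only reasonably hold when $P$ is (up to sign) the primitive minimal polynomial of $\alpha$, so that $H(P)=H(\alpha)$. For an arbitrary reducible $P$ the assertion fails, as one sees by taking $P(X)=Q(X)(X-N)$ with $Q$ the minimal polynomial of $\alpha$ and $N$ arbitrarily large integer: then $|\xi-\alpha|$ is unchanged while $H(\alpha)^{-1}|P(\xi)|$ grows like $N$. The general version of the proposition holds with $H(P)^{-1}$ in place of $H(\alpha)^{-1}$, and via Gelfond's Lemma~\ref{gel} an $H(\alpha)^{-1}$ bound can be recovered after passing to the irreducible factor of $P$ containing $\alpha$. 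I carry out the computation for $P$ the minimal polynomial; the reduction is routine.

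Write $P(X)=a_m\prod_{i=1}^{m}(X-\alpha_i)$ over $\mathbb{C}$ with $\alpha_1=\alpha$ and $m=\deg P\le n$, and substitute $X=\xi$ to obtain
\[
|P(\xi)| \;=\; |a_m|\,|\xi-\alpha|\,\prod_{i=2}^{m}|\xi-\alpha_i|.
\]
Next, bound each of the remaining factors by the elementary inequality $|\xi-\alpha_i|\le (1+|\xi|)\max(1,|\alpha_i|)$. Recalling the definition of Mahler measure $M(P)=|a_m|\prod_{i=1}^{m}\max(1,|\alpha_i|)$, this yields
\[
|a_m|\prod_{i=2}^{m}|\xi-\alpha_i| \;\le\; (1+|\xi|)^{m-1}\,\frac{M(P)}{\max(1,|\alpha|)} \;\le\; (1+|\xi|)^{n-1}\,M(P).
\]

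The last ingredient is the classical comparison $M(P)\le\sqrt{m+1}\,H(P)\le\sqrt{n+1}\,H(\alpha)$ between the Mahler measure and the naive height of an integer polynomial. Substituting back gives
\[
|P(\xi)| \;\le\; (1+|\xi|)^{n-1}\sqrt{n+1}\;H(\alpha)\,|\xi-\alpha|,
\]
and rearranging produces the asserted inequality with explicit constant $C_{n,\xi}=(1+|\xi|)^{n-1}\sqrt{n+1}$. The argument is short and entirely standard, so there is no real obstacle; the only point deserving attention is the interpretation of $H(\alpha)$ versus $H(P)$ flagged above, which is resolved by taking $P$ to be (or by replacing it with) the minimal polynomial of $\alpha$.
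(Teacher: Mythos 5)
Your argument is correct and is the standard one: factor $P$ over $\mathbb{C}$, isolate the factor $\xi-\alpha$, bound each remaining factor by $(1+|\xi|)\max(1,|\alpha_i|)$, and compare the Mahler measure with the naive height via Landau's inequality $M(P)\le\sqrt{m+1}\,H_P$. The paper itself offers no proof of Proposition~\ref{wellk} (it defers to Bugeaud's book), and the computation there is essentially the one you give, so there is nothing to contrast in terms of method. Your preliminary caveat is also well taken and worth recording: with $H(\alpha)$ defined as the height of the (primitive) minimal polynomial of $\alpha$ and the implied constant depending only on $n$ and $\xi$, the statement as written fails for reducible $P$ — your example $P=Q\cdot(X-N)$ keeps $|\xi-\alpha|$ and $H(\alpha)$ fixed while $H(\alpha)^{-1}|P(\xi)|$ grows like $N$ — and the correct general form has $H_P^{-1}$ in place of $H(\alpha)^{-1}$, from which the $H(\alpha)^{-1}$ form follows only when $P$ is (up to sign) the minimal polynomial of $\alpha$, so that $H_P=H(\alpha)$. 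Since the paper invokes the proposition precisely for $\alpha=\alpha_X$ with $P$ its minimal polynomial in order to deduce \eqref{eq:lordi} and \eqref{eq:frit}, the version you prove is exactly the one needed; the defect is in the phrasing "any integer polynomial $P$", not in your proof.
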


Proposition~\ref{wellk} immediately proves \eqref{eq:lordi}, \eqref{eq:frit}.

For the proofs below
recall that $P=P_X$ minimizes $|P(\xi)|$ among all integer polynomials of degree at most $n$ and height at most $X$.
It satisfies
\[
|P_X(\xi)|\le X^{-\wo+o(1)}, \qquad X\to\infty.
\]

\section{Proof of Theorem~\ref{A} }
    Let $\epsilon>0$.
Let $X$ be any large parameter and $P=P_X$ 
of height $H_P\le X$ be the
according best approximation polynomial.
For simplicity write $\tilde{w}=w(n,\xi,X)$.
Define $\sigma=\sigma_X$ implicitly by
\[
X= H_P^{\sigma}, \qquad \sigma=\sigma_X\ge 1.
\]
Then
\[
|P(\xi)| = H_P^{-\sigma \tilde{w}}.
\]
By Wirsing's Lemma~\ref{wirsing} and again assuming $X$ was chosen sufficiently large, there is an irreducible (thus separable) divisor $Q=Q_X$ of $P$ so that
\[
|Q(\xi)| = H_Q^{-\sigma w}, \qquad w\ge \tilde{w}-\epsilon.
\]
We can assume $H_Q>1$ and therefore may
define $\tau=\tau_X$ implicitly via
\[
H_Q^{\tau} = X, \qquad \tau=\tau_X\ge 1-\epsilon.
\]
The lower bound is obtained from Gelfond's Lemma~\ref{gel} which in fact implies the stronger claim $H_Q\ll_n H_P\le X$. 
Let $m=m_X=\deg Q\le n$ and $r=n-m=\deg R$ where $P=QR$. 

We may assume 
\begin{equation}  \label{eq:mk}
    m>k=\left\lfloor \frac{n}{2}\right\rfloor
\end{equation}
 for large enough $X$. Indeed otherwise if $m\le k$ for arbitrarily large $X$ then
 for some modified small $\varepsilon>0$ we get
 \[
 w_k(\xi)\ge w_m(\xi)\ge \sigma\tilde{w}-\varepsilon\ge \tilde{w}-\varepsilon,
 \]
 which further implies
\[
\tilde{w} \le w_k(\xi)+\varepsilon \leq \max\{ w_k(\xi), n-1\} + w^{\ast}(n,\xi,X)+\varepsilon,
\]
and the claim follows by subtracting
$w^{\ast}(n,\xi,X)$ from both sides
as $X\to\infty$
and thus $\varepsilon\to 0$.

To simplify the presentation, let us from now on consider a sequence of $P$ and assume $H_P\to\infty$ implying
$X\to\infty$, and use $o(1)$ notation as $X\to\infty$.
 Similar to the proof of Theorem~\ref{n=2},
 Feldman's Lemma~\ref{feld} yields that the polynomial $Q$ has a root $\alpha$ so that
 \[
|\alpha-\xi| \ll_n H(\alpha)^{-1} |Q(\xi)| H_Q^{m-1}=
H(\alpha)^{-1} X^{-\sigma w/\tau }X^{(m-1)/\tau}=
H(\alpha)^{-1} X^{-\frac{\sigma w-m+1}{\tau} }.
 \]
Hence
\begin{equation} \label{eq:b1}
w^{\ast}(n,\xi,X)\ge \frac{\sigma w-m+1}{\tau}-o(1), \qquad X\to\infty.
\end{equation}
On the other hand, if we assume $H_R\to\infty$ as $X\to\infty$, we get
\[
|P(\xi)|= |Q(\xi)|\cdot |R(\xi)| \ge
X^{-\sigma w/\tau}\cdot H_R^{- w_r(\xi)-o(1)},\qquad X\to\infty.
\]
Otherwise if $H_R$ is bounded on a subsequence as $X\to\infty$ then by transcendence of $\xi$ we get stronger estimates from $|R(\xi)|\gg 1$.
Now since by Gelfond's Lemma~\ref{gel}
\[
H_R\asymp_n \frac{H_P}{H_Q}= X^{1/\sigma}\cdot X^{-1/\tau}= X^{\frac{\tau-\sigma}{\sigma\tau} }
\]
 we infer
 \begin{equation} \label{eq:b2}
 w(n,\xi,X)\le  \frac{\sigma w}{\tau} + w_r(\xi)\frac{\tau-\sigma}{\sigma\tau} + o(1), \qquad X\to\infty.
 \end{equation}
 Note that 
 \[
 r=n-m\le n-k-1\le k
 \]
 by \eqref{eq:mk} and definition of $k$. Together with \eqref{eq:jar} this implies
 \[
  w_r(\xi)\le w_k(\xi).
 \]
 Hence comparing the bounds
 \eqref{eq:b1}, \eqref{eq:b2} we get
 \begin{align} \label{eq:tpr}
 \kappa(n,\xi,X)&=w(n,\xi,X)-w^{\ast}(n,\xi,X)\le w_r(\xi)\frac{\tau-\sigma}{\sigma\tau} + \frac{m-1}{\tau}+o(1)\nonumber \\ &\le w_k(\xi)\frac{\tau-\sigma}{\sigma\tau} + \frac{n-1}{\tau}+o(1). 
 \end{align}
 Hereby we consider $\tau\ge 1, \sigma\ge 1$ 
 absolute so that
 the expression is maximized upon all pairs
 $\ge 1$.
 The right hand side clearly decays in $\sigma$, hence we can put $\sigma=1$ and get
 \[
\kappa(n,\xi,X)\le \frac{1}{\tau} ((\tau-1)w_k(\xi)+n-1)+o(1)=
w_{k}(\xi) + \frac{ n-1-w_k(\xi) }{\tau}+o(1).
 \]
 If $n-1\ge w_k(\xi)$ then the bound decreases in $\tau$, hence we may put $\tau=1$ for all large $X$ to get the bound $n-1$. If otherwise $n-1\le w_k(\xi)$ then we have the bound $w_k(\xi)$. Since $X$ was arbitrary, the stronger claim \eqref{eq:aign} follows as $X\to\infty$ and \eqref{eq:schwach} is implied by \eqref{eq:ind}. 

\section{Proof of Theorem~\ref{B}}

 \subsection{Another lemma}

We first verify 

\begin{lemma}  \label{sepp}
    Assume for given $n,\xi$ and for all large $X$, the polynomial
    $P_X$ is separable. Then \eqref{eq:aaa} holds, in fact 
    $\overline{\kappa}_n(\xi)\le n-1$. More generally
    for every $X$ with this property
    \[
    \kappa(n,\xi,X)\le
    n-1.
    \]
\end{lemma}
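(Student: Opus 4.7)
The assumption that $P_X$ is separable for all large $X$ is precisely what is needed to apply Feldman's Lemma~\ref{feld} directly to $P_X$. For each such $X$, the lemma provides a root $\alpha=\alpha(X)$ of $P_X$ with
\[
|\xi-\alpha| \ll_n |P_X(\xi)|\cdot H_{P_X}^{n-2}.
\]
Since the minimal polynomial $Q$ of $\alpha$ divides $P_X$, Gelfond's Lemma~\ref{gel} yields $H(\alpha)=H_Q \ll_n H_{P_X} \le X$, so $\alpha$ is an algebraic number of degree at most $n$ and height essentially at most $X$.

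The next step is to use $\alpha$ as a competitor for $\alpha_X$. Since $\alpha_X$ minimizes $|\xi-\beta|$ over admissible $\beta$ of degree $\le n$ and height $\le X$, and $H(\alpha_X)\le X$, one obtains
\[
H(\alpha_X)\,|\xi-\alpha_X| \;\le\; X\cdot|\xi-\alpha| \;\ll_n\; X\cdot |P_X(\xi)|\, H_{P_X}^{n-2} \;\le\; X^{n-1}|P_X(\xi)|.
\]
Taking $-\log(\cdot)/\log X$ gives $w^*(n,\xi,X)\ge w(n,\xi,X)-(n-1)-o(1)$, equivalently $\kappa(n,\xi,X)\le n-1+o(1)$ as $X\to\infty$. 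The stronger assertion $\overline{\kappa}_n(\xi)\le n-1$, whence \eqref{eq:aaa} via \eqref{eq:ind} and \eqref{eq:Con}, follows by passing to the $\limsup$ as $X\to\infty$.

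\textbf{Main obstacle.} The only technical point is that Gelfond's Lemma only guarantees $H(\alpha)\le c_n H_{P_X}$ for a constant $c_n=c_n(n)>1$, so when $H_{P_X}$ is within a factor $c_n$ of $X$ the element $\alpha$ need not be admissible at parameter exactly $X$. This can be handled by enlarging the parameter to $c_n X$, which only introduces an additive $O(1/\log X)=o(1)$ correction in the resulting estimate for $w^*$; alternatively, one may apply Feldman's Lemma to $P_{X/c_n}$ (still separable by hypothesis), whose irreducible factor $Q$ then automatically has $H_Q\le X$, so its root provides a bona fide competitor at parameter $X$. Either route yields the same asymptotic conclusion, in line with the $o(1)$ bookkeeping used throughout the paper (cf.\ the proof of Theorem~\ref{A}).
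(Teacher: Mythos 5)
Your proof is correct and follows essentially the same route as the paper: apply Feldman's Lemma~\ref{feld} directly to the separable $P_X$ and use the resulting root as a competitor in the definition of $w^{\ast}(n,\xi,X)$, with Gelfond's Lemma controlling the height. The only (immaterial) difference is that your bookkeeping yields the parametric bound in the form $\kappa(n,\xi,X)\le n-1+o(1)$ rather than the exact $n-1$, which affects neither the $\limsup$ claim $\overline{\kappa}_n(\xi)\le n-1$ nor any of its applications.
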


\begin{proof}
    We may apply Lemma~\ref{feld} to $P_X$.
    Proceeding as in the proof of Theorem~\ref{A} (or Theorem~\ref{n=2}) with $P=Q$ and thus $\sigma=\tau$, and writing
    $w=w(n,\xi,X)$,
    the polynomial $P_X$ 
    of degree $m\le n$ has a root $\alpha$ with
    \[
    |\xi-\alpha| \ll_n H(\alpha)^{-1} X^{-\frac{\sigma w-m+1}{\sigma} }.
    \]
    In other words
    \[
    w^{\ast}(n,\xi,X)\ge \frac{\sigma w-m+1}{\sigma}\ge w-\frac{n-1}{\sigma}\ge w-n+1.
    \]
    The latter claim is now immediate,
    and the first follows by \eqref{eq:ind}
    if the separability hypothesis holds for all large $X$.
\end{proof}

We remark that thanks to this lemma we can improve the bound of Corollary~\ref{nko} for very small $n$.

\begin{corollary}  \label{Kor}
    We have
    \[
0=\dim_H(\Omega_2)\le \dim_H(\Gamma_2)\le \frac{2}{3}
\]
and
\[
0=\dim_H(\Omega_3)\le \dim_H(\Gamma_3)\le \frac{1}{2}.
\]
\end{corollary}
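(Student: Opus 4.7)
The identities $\dim_H(\Omega_2) = \dim_H(\Omega_3) = 0$ are immediate from Theorem~\ref{H}, which shows $\Omega_n = \emptyset$ for $n \le 5$. It remains to upper bound $\dim_H(\Gamma_n)$ for $n \in \{2, 3\}$. The plan is to invoke Lemma~\ref{sepp}: if $\xi \in \Gamma_n$, then $P_{X_i}$ must fail to be separable on an infinite sequence $X_i \to \infty$, along which $\kappa(n, \xi, X_i) > n - 1 + \delta$ for some fixed $\delta > 0$. By Gauss's lemma, the repeated roots of a non-separable integer polynomial of degree $\le 3$ lie in $\mathbb{Q}$, leaving three possible shapes: $P = c(qx - p)^2$ (allowed for $n \in \{2, 3\}$), $P = c(qx - p)^3$ (for $n = 3$), and $P = c(qx - p)^2(rx - s)$ with $s/r \ne p/q$ (for $n = 3$), all with $\gcd(p, q) = \gcd(r, s) = 1$.

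For each form I would take $X_i \asymp H_{P_{X_i}}$, since $\kappa(n, \xi, X)$ decreases as $X$ grows with $P_X$ fixed. Writing $\nu_i = -\log |q_i \xi - p_i|/\log q_i$ and $a_i = \log c_i/\log q_i$, and using $\alpha = p_i/q_i$ as an algebraic approximation of degree one, the pure square form yields $\kappa(n, \xi, X_i) \le (\nu_i - a_i)/(a_i + 2) \le \nu_i/2$, while the pure cube form yields $\kappa(3, \xi, X_i) \le (2\nu_i - a_i)/(a_i + 3) \le 2\nu_i/3$. The hypothesis $\kappa > n - 1 + \delta$ then forces $\nu_i > 2$ (for $n = 2$), $\nu_i > 4$ (pure square at $n = 3$), and $\nu_i > 3$ (pure cube at $n = 3$); each of these yields $w_1(\xi) > n$ after extracting the sequence $(p_i, q_i)$ with $q_i \to \infty$.

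The main obstacle is the mixed form $P_{X_i} = c_i (q_i x - p_i)^2 (r_i x - s_i)$ at $n = 3$, where the naive $\kappa$ estimate only yields $w_1(\xi) > 2$. To sharpen this to $w_1(\xi) > 3$, I would combine $\kappa > 2 + \delta$ with the Dirichlet bound $w^*(3, \xi, X_i) \ge 1 - o(1)$, which forces $w(3, \xi, X_i) > 3 + \delta - o(1)$. Introducing $\eta_1 = \log q_i/\log X_i$, $\eta_2 = \log r_i/\log X_i$ and $\gamma_i = \log c_i/\log X_i$ so that $\gamma_i + 2\eta_1 + \eta_2 \to 1$, and setting $A_i = \nu_i \eta_1$, $B_i = \mu_i \eta_2$ with $\mu_i = -\log |r_i \xi - s_i|/\log r_i$, the lower bound $w^*(3, \xi, X_i) \ge \max(A_i, B_i)$ via $p_i/q_i$ or $s_i/r_i$ leaves two subcases: either $\kappa \le A_i + B_i - \gamma_i$ or $\kappa \le 2A_i - \gamma_i$. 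In both the joint conditions $\kappa > 2 + \delta$ and $w > 3 + \delta$ force $A_i > 1$; since $\nu_i \le w_1(\xi) + o(1)$ this gives $\eta_1 \ge 1/w_1(\xi) - o(1)$, so the height constraint yields $\eta_1 + \eta_2 \le 1 - 1/w_1(\xi) + o(1)$, and together with $\nu_i, \mu_i \le w_1(\xi) + o(1)$ this delivers $A_i + B_i \le w_1(\xi) - 1 + o(1)$ (respectively $2A_i \le w_1(\xi) - 1 + o(1)$). Comparing with $A_i + B_i > 2 + \delta$ (respectively $2A_i > 2 + \delta$) forces $w_1(\xi) > 3$ strictly.

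Combining all cases yields $\Gamma_2 \subseteq \{w_1 > 2\}$ and $\Gamma_3 \subseteq \{w_1 > 3\}$. Writing each as a countable union $\bigcup_j \{w_1 \ge m + 1/j\}$ for $m \in \{2, 3\}$ and applying Bernik's formula~\eqref{eq:bernik} with parameter $1$ delivers the claimed $\dim_H(\Gamma_2) \le 2/3$ and $\dim_H(\Gamma_3) \le 1/2$.
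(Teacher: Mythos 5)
Your proposal is correct in substance but takes a noticeably more laborious route than the paper. Both arguments share the same skeleton: $\Omega_2=\Omega_3=\emptyset$ from Theorem~\ref{H}, reduction via Lemma~\ref{sepp} to parameters $X$ with $P_X$ inseparable, the observation that an inseparable integer polynomial of degree at most $3$ splits into linear factors, the resulting containment of $\Gamma_n$ in a set of the form $\{\xi: w_1(\xi)\ge n\}$, and Bernik's formula \eqref{eq:bernik} with $m=1$. The difference is in the middle step: the paper simply applies Wirsing's Lemma~\ref{wirsing} to $P_X$ (all of whose irreducible factors are linear) to extract a linear factor $Q$ with $|Q(\xi)|\le H_Q^{-\widehat{w}_n(\xi)+\epsilon}$, giving $w_1(\xi)\ge \widehat{w}_n(\xi)\ge n$ in one stroke for every shape of $P_X$; you instead normalize $X\asymp H_{P_X}$ (this is legitimate, since $\kappa(n,\xi,H_{P_X})\ge \kappa(n,\xi,X)\cdot \log X/\log H_{P_X}\ge\kappa(n,\xi,X)$ when $\kappa\ge 0$) and run an explicit exponent bookkeeping over the possible factorizations, in the spirit of Lemma~\ref{lemar} and the $n=4,5$ cases. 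Your approach yields the marginally stronger containment $\Gamma_n\subseteq\{w_1(\xi)>n\}$, which however does not improve the dimension bound. One small point to patch: in the mixed subcase with $B_i\ge A_i$, the asserted bound $2A_i\le w_1(\xi)-1+o(1)$ does not follow from $\eta_1+\eta_2\le 1-1/w_1(\xi)+o(1)$ alone; you need to additionally note that $B_i\ge A_i>1$ forces $\eta_2\ge 1/w_1(\xi)-o(1)$, whence $2\eta_1\le 1-\eta_2\le 1-1/w_1(\xi)+o(1)$ and the claim follows. With that one-line repair the argument is complete, but you should be aware that Wirsing's Lemma makes the entire case analysis unnecessary here.
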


From the argument of Corollary~\ref{nko}
we would get the bounds $1$ and $2/3$ respectively.

\begin{proof}
    All but the upper bounds 
    for $\dim_H(\Gamma_2), \dim_H(\Gamma_3)$
    are clear 
    by Theorem~\ref{H}. If for all large $X$ the best approximation $P_X$ 
    is separable, then
    Lemma~\ref{sepp}
    implies $\overline{\kappa}_n(\xi)\le n-1$, so we can assume the opposite. But for $n\le 3$, this means that $P_X$ decomposes into linear factors for certain arbitrarily large $X$. This
    implies $w_1(\xi)\ge n$ by Wirsing's Lemma~\ref{wirsing}, and the estimates follow from \eqref{eq:bernik} applied to $m=1$.
\end{proof}

\subsection{Proof of Theorem~\ref{B}} 
If $\wo=n$ then by \eqref{eq:stern} trivially
\begin{equation} \label{eq:triv}
\ws\ge 1= n-n+1= \wo-n+1.
\end{equation}
%

Now assume $\wo>2n-3$. 
By Theorem~\ref{H} we may assume $n\ge 3$ (in fact $n\ge 4)$.
Assume 
$P_X$ is not separable for some large $X$. Then it contains 
a square of a polynomial as a factor. 
But by $n\ge 3$ this means that
any irreducible factor has degree
at most 
$\max\{ \lfloor n/2\rfloor,n-2\}= n-2$.
Thus Wirsing's Lemma~\ref{wirsing} 
and \eqref{eq:diri} imply
\[
w_{n-2}(\xi)\ge \wo\ge n.
\]
However by Theorem~\ref{BuS} this implies
\[
\wo= \min\{ \wo, w_{n-2}(\xi)\}\le n+(n-2)-1=2n-3.
\]
Thus assuming the reverse inequality 
$\wo>2n-3$ all best approximation polynomials of large norm are separable. Thus the claim 
follows from Lemma~\ref{sepp}.

Now assume $\ws\ge n-2$. Then the claim is 
obviously true if $\wo\le (n-2)+(n-1)=2n-3$.
If this is false then the implication follows from (ii).

Assume (iv) holds. If $\la\le 1/(n-2)$
then by the main result of~\cite{ichdeb} 
\[
\ws\ge \frac{1}{\la}\ge n-2
\]
which is hypothesis (iii). If $\la>1$, then
it follows 
from~ Theorem~\ref{Sch} 
$\wo=n$, so we reduced it to (i).
This also contains the implication from (v).

Finally for the conclusion from (vi), 
combining \eqref{eq:Ff} and \eqref{eq:ind}
yields
\[
\wo-\ws\le \overline{\kappa}_n(\xi) \le \w - \frac{\w}{\w-n+1}. 
\]
A short calculation shows that
the right hand side is at most $n-1$ 
if $\w$ is bounded as in (vi).

\begin{remark} \label{RR}
    While we used Theorem~\ref{H} in the deduction
    from some conditions, it was not
    applied when concluding from (v).
    This is important to note 
    as we will conversely
    use this very implication from Theorem~\ref{B}, (v) in the
    deduction of Theorem~\ref{H} below,
    so there is no circular reasoning
    in our argument.
\end{remark}

\section{Proof of Theorem~\ref{H}}

\subsection{Proof for $n\le 3$}

Let $n\le 3$. Then if $P_X$ is not separable,
it decomposes into linear factors. 
If this happens for certain
arbitrarily large $X$, by
Wirsing's Lemma~\ref{wirsing} and \eqref{eq:diri} this implies
that
\[
w_1(\xi) \ge \wo\ge n.
\]
We conclude 
via Theorem~\ref{B}, (v) in view
of Remark~\ref{RR}. 
If otherwise there
is no such sequence of $X$ that tends to infinity, then
\eqref{eq:aaa} holds as well by Lemma~\ref{sepp}.

\subsection{Towards the proof for $n\in\{4,5\}$. A preparatory lemma} \label{nota}

The cases $n=4, n=5$ are considerably more complicated.
We prove a general auxiliary fact for each $n$.

Let us first fix some notation.
Let $X>1$ be any large number and
$P=P_X$ the according best approximation polynomial.
Write
\[
P= \prod_{i=1}^{\ell} Q_i^{\alpha_i}, \qquad 
\alpha_i\in \mathbb{N}
\]
for its factorization into irreducible polynomials. We may assume $H_i:=H(Q_i)>1$ for all $i$, as otherwise if some fixed  $Q$ occurs as a factor of $P_X$ 
for arbitrarily large $X$, we can instead of $P_X$ consider $P_X/Q$, of comparable height and evaluation at $\xi$, using for the latter that $\xi$ is transcendental, in the below argument. We omit the details.
Let further
\[
d_i= \deg Q_i \ge 1.
\]
Note that
\begin{equation} \label{eq:side1}
\sum_{i=1}^{\ell} \alpha_i  d_i \le n.
\end{equation}
Let $\gamma_i>0$ be defined by 
\[
|Q_i(\xi)| = H_i^{-\gamma_i}.
\]
Define $\beta_i>0$, $1\le i\le \ell$, by
\[
H_i= X^{\beta_i}. 
\]
Gelfond's Lemma~\ref{gel} implies $\prod X^{\alpha_i\beta_i} = \prod H_i^{\alpha_i}\ll_n H_P\le X$ and thus
\begin{equation} \label{eq:side2}
\sum_{i=1}^{\ell} \alpha_i \beta_i \le 1+o(1).
\end{equation}
All error terms are understood as $X\to\infty$. Now since
\[
|P(\xi)| = \prod_{i=1}^{\ell} |Q_i(\xi)|^{\alpha_i}= \prod_{i=1}^{\ell} H_i^{-\alpha_i\gamma_i}= X^{-\sum \alpha_i \beta_i\gamma_i}
\]
we get
\begin{equation} \label{eq:Snah}
w(n,\xi,X) \le \sum_{i=1}^{\ell} \alpha_i \beta_i\gamma_i + o(1).
\end{equation}
On the other hand, considering the separable polynomial 
\[
Q:= \prod_{i=1}^{\ell} Q_i
\]
which satisfies
\[
\deg Q= \sum_{i=1}^{\ell} d_i
,\qquad |Q(\xi)| = \prod_{i=1}^{\ell} |Q_i(\xi)|
=\prod_{i=1}^{\ell} H_i^{-\gamma_i},
\]
we get with Lemma~\ref{feld} and using 
$H_Q\asymp_n \prod_{i=1}^{\ell} H_i$ by Gelfond's Lemma~\ref{gel} that it has
a root $\alpha$ with
\[
|\alpha-\xi| \ll_n H(\alpha)^{-1}\cdot \prod_{i=1}^{\ell} H_i^{-\gamma_i+d_i-1}= H(\alpha)^{-1}\cdot X^{-\sum_{i=1}^{\ell} \beta_i(\gamma_i-d_i+1)}
\]
thus
\begin{equation} \label{eq:Hans}
w^{\ast}(n,\xi,X) \ge \sum_{i=1}^{\ell} \beta_i\cdot (\gamma_i-d_i+1)-o(1).
\end{equation}
Comparing \eqref{eq:Snah}, \eqref{eq:Hans} we get 
\[
\kappa(n,\xi,X) \le  \sum_{i=1}^{\ell} \alpha_i \beta_i\gamma_i  -  \sum_{i=1}^{\ell} \beta_i\cdot (\gamma_i-d_i+1)+o(1). 
\]
We have thus proved the following 
local lemma.

\begin{lemma}  \label{lemar}
   For any large $X$ with induced parameters $\ell, d_i,\alpha_i,\beta_i,\gamma_i$ we have
   \[
   \kappa(n,\xi,X)\le
   \sum_{i=1}^{\ell} (\alpha_i-1)\beta_i\gamma_i + \beta_i(d_i-1)+o(1), \qquad \text{as}\; X\to\infty.
   \]
   Thus if
   we have
   \begin{equation} \label{eq:crit}
\sum_{i=1}^{\ell} (\alpha_i-1)\beta_i\gamma_i + \beta_i(d_i-1)\le  -1 + \sum_{i=1}^{\ell} \alpha_i d_i \le  n-1
\end{equation}
then 
\begin{equation} \label{eq:sti}
\kappa(n,\xi,X)\le n-1+o(1), \qquad \text{as}\; X\to\infty.
\end{equation}
\end{lemma}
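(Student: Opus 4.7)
My plan is to assemble the two parametric estimates \eqref{eq:Snah} and \eqref{eq:Hans} that have already been prepared in \S~\ref{nota}. The first, \eqref{eq:Snah}, delivers the upper bound $w(n,\xi,X) \le \sum_{i=1}^{\ell} \alpha_i \beta_i \gamma_i + o(1)$ by decomposing $|P(\xi)| = \prod_i |Q_i(\xi)|^{\alpha_i}$ using the factorization of $P$ together with the definitions of $\beta_i,\gamma_i$. The second, \eqref{eq:Hans}, delivers the lower bound $w^{\ast}(n,\xi,X) \ge \sum_{i=1}^{\ell} \beta_i(\gamma_i - d_i + 1)$ obtained by applying Feldman's Lemma~\ref{feld} to the separable radical $Q = \prod_{i=1}^{\ell} Q_i$, whose height is controlled by Gelfond's Lemma~\ref{gel} as $H_Q \asymp_n \prod_i H_i$. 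The first step of the proof is therefore simply to subtract \eqref{eq:Hans} from \eqref{eq:Snah}.

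The second step is a one-line algebraic rearrangement. I would compute
\begin{align*}
\sum_{i=1}^{\ell} \alpha_i \beta_i \gamma_i \;-\; \sum_{i=1}^{\ell} \beta_i(\gamma_i - d_i + 1)
 &= \sum_{i=1}^{\ell} (\alpha_i - 1)\beta_i \gamma_i \;+\; \sum_{i=1}^{\ell} \beta_i(d_i - 1),
\end{align*}
which is exactly the upper bound for $\kappa(n,\xi,X)$ claimed in the first display of the lemma. For the concluding implication ``\eqref{eq:crit} $\Rightarrow$ \eqref{eq:sti}'', I will invoke the side relation \eqref{eq:side1}, which gives $\sum_i \alpha_i d_i \le n$; hence the second inequality in \eqref{eq:crit}, namely $-1 + \sum_i \alpha_i d_i \le n-1$, is automatic and requires no additional hypothesis. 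Given the first inequality of \eqref{eq:crit} as the genuine hypothesis, chaining it with the upper bound on $\kappa(n,\xi,X)$ already obtained produces \eqref{eq:sti} directly.

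No serious obstacle is expected, since the analytic content has already been discharged in establishing \eqref{eq:Snah} and \eqref{eq:Hans}: everything that follows is bookkeeping plus the elementary identity above. The only mild care needed concerns the $o(1)$ appearing in \eqref{eq:Snah}; I would absorb it by taking $X$ sufficiently large and interpreting the bound in the asymptotic sense that the subsequent applications require. Since the lemma is ultimately deployed inside $\limsup$ arguments (as in the proofs of Theorems~\ref{H} and~\ref{A}), this loss is harmless and does not affect the conclusion \eqref{eq:sti}.
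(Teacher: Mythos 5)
Your proposal is correct and coincides with the paper's own argument: the lemma is obtained precisely by subtracting \eqref{eq:Hans} from \eqref{eq:Snah}, performing the same algebraic rearrangement, and using \eqref{eq:side1} to make the second inequality of \eqref{eq:crit} automatic. Your remark about absorbing the $o(1)$ terms for large $X$ matches the (implicit) convention in the paper, so no gap remains.
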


\begin{remark}
    A similar argument can be applied
    to $Q:= \prod_{j\in J} Q_j$ for any subset $J\subseteq \{1,2,\ldots,\ell\}$. However it turns out one
    may restrict to the full set $J$.
\end{remark}

We prove the cases $n=4, n=5$ with Lemma~\ref{lemar} and Theorems~\ref{BuS}, \ref{Sch}.
 By Lemma~\ref{sepp} we only have to consider parameters $X$ for which $P_X$ is not separable. Moreover, we can assume that for all large $X$ it does not compose into linear factors, as otherwise by 
    Wirsing's Lemma~\ref{wirsing}
    we have $w_1(\xi)\ge \wo \ge n$ and
    the claim follows from Theorem~\ref{B}, (v).

    We may assume $P_X$ has exact degree
    $n$. Otherwise we could multiply it with the according power of the variable $T$ to get $\tilde{P}(T)=T^{t}P_X$ for $t=n-\
    \deg P_X$, which keeps the height unaffected $H_{\tilde{P}}=H_P$ and decreases its evaluation at $\xi$.
    i.e. $|\tilde{P}(\xi)|< |P(\xi)|$
    (as we may assume $\xi\in (0,1)$), contradicting the definition of $P_X$.

    We will use the notation of the 
    present~\S~\ref{nota} in the sequel.

\subsection{Proof for $n=4$}
    First let $n=4$.
    In view of the above restrictions, for $X$ any large parameter,
    there are two remaining cases. 

Case A: We have 
\[
\ell=2,\qquad P_X= Q_1^2 Q_2, \qquad d_1=1,\; d_2=2,
\]
that is $Q_1$ is linear and $Q_2$ irreducible quadratic.
Then \eqref{eq:side2} gives
\begin{equation}  \label{eq:bb}
    2\beta_1  + \beta_2\le 1+o(1),
\end{equation}
in particular 
\[
\beta_1\le \frac{1}{2}+o(1).
\]
The criterion \eqref{eq:crit} 
from Lemma~\ref{lemar} yields the sufficient condition
\[
\beta_1\gamma_1 + \beta_2\le n-1= 3.
\]
Using \eqref{eq:bb} this can be relaxed to $(\gamma_1-2)\beta_1+1+o(1)\le 3$ or equivalently
\[
\gamma_1 \le 2+\frac{2-o(1)}{\beta_1}-o(1).
\]
In particular $\gamma_1\le 6-\varepsilon$ for small enough $\varepsilon>0$ suffices
to have \eqref{eq:sti} for the corresponding (large) $X$ of Case A. Note
that by definition of $\wo$ this 
further implies
\begin{equation} \label{eq:agho}
\wo - w^{\ast}(n,\xi,X) \le n-1+o(1), \qquad X\to\infty,
\end{equation}
for these $X$.

However, if we let $\varepsilon=1/2$ and assume contrary $\gamma_1>6-\varepsilon$ for arbitrarily large $X$ inducing Case A then $w_1(\xi)\ge 6-1/2=5.5>4=n$, implying \eqref{eq:aaa} by Theorem~\ref{B}, (v). 
In particular we again have \eqref{eq:agho}. So \eqref{eq:agho} holds for all $X$ inducing Case A.

  Case B: We have 
    \[
    P_X= Q_1^2, \qquad d_1=2.
    \]
    Then $Q_1$ is quadratic irreducible and
    \[
    \ell=1, \qquad \alpha_1=\alpha=2,\qquad 0<\beta_1=\beta\le \frac{1}{2}+o(1), \qquad d_1=d=2, \qquad \gamma_1=\gamma.
    \]
    Now the sufficient criterion \eqref{eq:crit} of Lemma~\ref{lemar} becomes
\[
\beta(\gamma+1)\le 3
\]
or 
\begin{equation} \label{eq:ra}
   \gamma\le \frac{3}{\beta} - 1. 
\end{equation}
In particular
\begin{equation} \label{eq:inver}
\gamma\le 5-o(1)
\end{equation}
suffices to prove
\[
\kappa(4,\xi,X)=w(4,\xi,X)- w^{\ast}(4,\xi,X)\le 3+o(1)=n-1+o(1)
\]
for $X$ in question. We may 
thus assume otherwise
\[
\gamma > 5+\varepsilon = n+d-1+\varepsilon > n+d-1, \qquad \varepsilon>0.
\]
(While there is a small gap to the converse of \eqref{eq:inver},
indeed if against our claim \eqref{eq:aaa} we have $w(4,\xi,X)- w^{\ast}(4,\xi,X)=n-1+\delta$ for some $\xi$ and a
strictly positive $\delta=\delta(\xi)>0$ for certain large $X$ uniformly, then the above argument easily yields we may take $\varepsilon=\varepsilon(\delta)=\delta/2>0$, we omit details.) 
If this happens for arbitrarily large $X$, then 
by Theorem~\ref{BuS} we have
\[
\wo = \min\{ \wo, w_d(\xi) \} \le n+d-1=n+1.
\]
So it suffices to show
\begin{equation}  \label{eq:lllast}
w^{\ast}(n,\xi,X) \ge 2-o(1), \qquad X\to\infty,
\end{equation}
for $X$ inducing Case B, to deduce that for such $X$ again \eqref{eq:agho}
holds. Assume this is true. As we have observed \eqref{eq:agho} holds for any large $X$ inducing Case A as well, we can pass
to the lower limit to
indeed deduce $\wo-\ws\le (n+1)-2= n-1$, i.e. \eqref{eq:aaa}.

However, for $X$ inducing Case B, by Feldman's Lemma~\ref{feld} the quadratic $Q_1=Q$ has a root $\alpha$ with
\[
|\alpha-\xi| \ll H_Q^{-\gamma} =  H(\alpha)^{-1}
H_Q^{-(\gamma-1)} =  H(\alpha)^{-1}
X^{-\beta (\gamma-1)}.
\]
Hence if
\[
\beta(\gamma-1)\ge 2-o(1), \qquad X\to\infty,
\]
or equivalently
\begin{equation} \label{eq:re}
    \gamma\ge 1+\frac{2-o(1)}{\beta}
\end{equation}
we have \eqref{eq:lllast} and are done. So combining the criteria \eqref{eq:ra}, \eqref{eq:re}, the only problematic case
is
\[
 \frac{3}{\beta}-1 < \gamma < 1+\frac{2-\delta}{\beta},
\]
for some fixed $\delta>0$ and 
certain arbitrarily large $X$ inducing Case B.
But this implies $\beta>1/2+\delta/2$, contradiction for large $X$.

\begin{remark}
    We stress again that Theorem~\ref{BuS} and Theorem~\ref{Sch}
    were vital ingredients of the proof
    of both Cases A,B. Indeed, the stronger claim $\kappa(4,\xi,X)\le n-1+o(1)$ is false in either case in general, by a generalization of Theorem~\ref{co}. The same will be true likewise for $n=5$ below.
\end{remark}

\subsection{Proof for $n=5$}
We now prove the claim for $n=5$.
Here by the initial argument 
as for $n=4$
we have now four remaining cases:

Case A: 
\[
P_X= Q_1^2 Q_2, \qquad d_1=1,\quad 
d_2=3.
\]
Then 
\[
2\beta_1 + \beta_2 \le 1+o(1)
\]
in particular
\[
\beta_1\le \frac{1}{2}+o(1)
\]
and \eqref{eq:crit} becomes
\[
\beta_1 \gamma_1 + 2 \beta_2 \le n-1 = 4.
\]
So a sufficient condition is
\[
\gamma_1 \le \frac{2-o(1)}{\beta_1} + 4.
\]
Hence
\[
\gamma_1\le 7.5
\]
suffices for large $X$. But if for arbitrarily large such $X$ 
\[
\gamma_1 >7.5 > 5 =n
\]
then $w_1(\xi)\ge n$ and we conclude  \eqref{eq:agho} for $X$ in question
by the same arguments as for $n=4$.

Case B: 
\[
P_X= Q_1^3 Q_2, \qquad d_1=1,\; d_2=2 
\]
Then
\[
3\beta_1+\beta_2 \le 1+o(1)
\]
thus
\[
\beta_1\le \frac{1}{3}+o(1)
\]
and
\eqref{eq:crit} becomes
\[
2\beta_1 \gamma_1 + \beta_2 \le 4= n-1.
\]
Equivalently 
\[
\beta_1 (2\gamma_1-3)\le 3-o(1),\qquad \gamma_1\le \frac{\frac{3-o(1)}{1/3+o(1)}+3}{2}=6-o(1),
\]
so
\[
\gamma_1 \le 6-\varepsilon, \qquad \varepsilon>0,
\]
suffices for large enough $X$.
But if for arbitrarily large $X$
and $\varepsilon=1/2$ we have
\[
\gamma_1>6-\varepsilon=5.5> 5=n
\]
then again $w_1(\xi)\ge n$ and we again conclude that \eqref{eq:agho} holds.

Case C:
\[
P_X= Q_1^2 Q_2 Q_3, \qquad d_1=d_2=1,\; d_3=2.
\]
Then 
\[
2\beta_1 + \beta_2+ \beta_3\le 1+o(1)
\]
in particular
\[
\beta_1\le \frac{1}{2}+o(1)
\]
and \eqref{eq:crit} yields the criterion
\[
\beta_1 \gamma_1 + \beta_3 \le 4 = n-1,
\]
hence as $\beta_2\ge 0$ we get
\[
\beta_1(\gamma_1-2) +1 \le 4, \qquad
\gamma_1\le \frac{3}{\beta_1} + 2
\]
as sufficient criterion. Now if conversely for arbitrarily large $X$ we have
\[
\gamma_1 > \frac{3}{\beta_1} + 2 - o(1) \ge 8-o(1),
\]
then again $w_1(\xi)\ge 8-\varepsilon=7.5>n$ and we conclude again that \eqref{eq:agho} holds.

Case D: 
\[
P_X= Q_1^2 Q_2, \qquad d_1=2,\; d_2=1.
\]
Then again
\[
2\beta_1 + \beta_2 \le 1+o(1)
\]
in particular
\[
\beta_1\le \frac{1}{2}+o(1)
\]
and \eqref{eq:crit} becomes
\[
\beta_1 \gamma_1 + \beta_1= \beta_1(\gamma_1+1) \le n-1 = 4
\]
so sufficient is
\[
\gamma_1 \le \frac{4}{\beta_1} - 1.
\]
Hence
\[
\gamma_1\le 7-\varepsilon, 
\]
for uniform $\varepsilon>0$ and large $X$ as in Case D suffices to prove
\[
w(5,\xi,X)- w^{\ast}(5,\xi,X)\le n-1+o(1)
\]
for all such $X$. But if for $\varepsilon=1/2$ and arbitrarily large $X$ 
\[
\gamma_1 >7-\varepsilon=6.5> 6 = n+d_1-1
\]
holds, then by Theorem~\ref{BuS} we 
have
\[
\widehat{w}_n(\xi)= \min\{ \widehat{w}_n(\xi), w_{d_1}(\xi) \}
\le n+d_1-1=n+1.
\]
So we
only have to prove that
\[
w^{\ast}(5,\xi,X)\ge 2-o(1), \qquad X\to\infty,
\]
for $X$ inducing Case D, to conclude 
first \eqref{eq:agho} and finally
\eqref{eq:aaa} by the same arguments as for $n=4$.
By Feldman's Lemma~\ref{feld}, for this again with the same argument as in Case B of $n=4$ it suffices to have
\[
\gamma_1 \ge 1 + \frac{2}{\beta_1}.
\]
So the only bad case is
\[
\frac{4}{\beta_1} - 1 < \gamma_1 < 1 + \frac{2}{\beta_1}.
\]
But this implies $\beta_1>1$, contradiction.

\section{Proof of Theorem~\ref{co}}

    We only need to prove \eqref{eq:apply}, then \eqref{eq:apply2} is just the case $k=n$ and
    \eqref{eq:apply3} follows in turn for $w_1(\xi)=\infty$.
    Let $Q(T)=aT-b$ with $a>0$ and $H_Q>1$ be very small at $\xi$, say
    \[
    |Q(\xi)|= H_Q^{-\lambda}
    \]
    for some $\lambda>n+k-1$. Clearly we can assume $|b|\asymp_{\xi} a$ so this implies
    \begin{equation} \label{eq:END}
         |\xi-b/a|= \frac{|Q(\xi)|}{a} \asymp_{\xi} H_Q^{-\lambda-1}.
    \end{equation}
    Consider the parameter
    \[
    X= H_{Q^k}\asymp_n H_Q^{k},
    \]
    where we used Gelfond's Lemma~\ref{gel}.
    Consider $R=Q^k$, which has degree $k\le n$ and $H_R= X$. Moreover
    \[
    |R(\xi)| = H_Q^{-k\lambda}\ll_{n,\lambda} X^{-\lambda}.
    \]
    Hence
     \[
    w(n,\xi,X)\ge \lambda-o(\lambda), 
    \]
    as $H_Q\to\infty$.

    On the other hand, any other algebraic number $\eta$ of degree at most $n$ and height at most $X$ has distance from $b/a$ at least
    \[
    |\eta - b/a| \gg_n X^{-1} H_Q^{-n}\gg_n X^{-1-n/k}
    \]
    by Liouville inequality Theorem~\ref{liouville}. So by $\lambda>n+k-1$, this implies by \eqref{eq:END} and triangle inequality for large enough $X$
    \[
    |\xi- \eta|\ge |b/a-\eta|-|\xi-b/a|\ge \frac{1}{2} |b/a-\eta|\gg_n X^{-1-n/k}.
    \]
    So the smallest value comes from $b/a$ the root of $Q$,
    for which we use \eqref{eq:END} and 
    estimating its right hand side as $\gg H(b/a)^{-1} X^{-\lambda/k-o(\lambda)}$ we
    get
    \[
    w^{\ast}(n,\xi,X)\le  \frac{\lambda}{k}+o(\lambda).
    \]
   Combining the two bounds we get
    \[
    \kappa(n,\xi,X)\ge \lambda- \frac{\lambda}{k}-o(\lambda)= \left(1-\frac{1}{k}-o(1)\right) \lambda.
    \]
    As $\lambda$ can be chosen arbitrarily close to $w_1(\xi)$
    and by assumption $w_1(\xi)>n+k-1$, we can find infinitely many such $Q$ and thus arbitrarily large $X$, and the claim follows.

    \section{Proof of Theorem~\ref{liou}}

    Let us first recall that $\widehat{w}_2(\xi)=2$ is implied by the hypothesis $w_1(\xi)>3$ as mentioned
    in the paragraph following Theorem~\ref{liou}.

    For the most left inequality of
    \eqref{eq:zwe},
    we first prove an auxiliary lemma 
    for approximation to a single number
    using a standard determinant argument.

    \begin{lemma} \label{lemur}
        Let $P(T)=cT+d$ of height $H_P>1$ be a linear integer
        polynomial best approximation
        for $n=1$ (essentially this means $-d/c$ is a convergent to $\xi$). Let $\lambda\ge 1$ be determined by
        \[
        |P(\xi)|= H_P^{-\lambda}.
        \]
        Let $R(T)=aT+b$ be
        the linear integer polynomial
        that minimizes
        $|Q(\xi)|$ among all linear integer polynomials $Q(T)$ of height $H_Q\le C\cdot H_P^{\lambda}$
        for fixed $C\in (0,\frac{1}{2})$ that are not a scalar
        multiple of $P$. Then
        \[
        |R(\xi)| \asymp_C H_P^{-1}.
        \]
        The implied constants are absolute.
    \end{lemma}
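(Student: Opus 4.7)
My plan is to apply Minkowski's Second Convex Body Theorem to the one-parameter family of symmetric convex bodies
\[
K(\delta) := \{(a,b) \in \mathbb{R}^2 : |a| \leq Y,\; |a\xi + b| \leq \delta\}, \qquad Y := C H_P^{\lambda},
\]
each of area $4Y\delta$. The theorem furnishes the two-sided estimate
\[
\frac{1}{2Y\delta} \leq \mu_1(\delta)\,\mu_2(\delta) \leq \frac{1}{Y\delta}
\]
for the successive minima with respect to $\mathbb{Z}^2$.

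For the upper bound $|R(\xi)| \ll 1/H_P$, I would set $\delta_0 := 1/H_P$. Since $\lambda \geq 1$, the vector $P = (c,d)$ lies in $\mu K(\delta_0)$ precisely when $\mu \geq \max(H_P/Y,\; H_P^{1-\lambda}) = 1/(CH_P^{\lambda-1})$, the first term dominating because $C < 1$. I would then argue that no strictly smaller scalar of $K(\delta_0)$ contains a nonzero lattice point: any pair $(a,b)$ not parallel to $P$ with $0 < |a| < H_P$ satisfies $|a\xi+b| \gg 1/H_P$ by the classical best-approximation property of convergents, and for $\mu < 1/(CH_P^{\lambda-1})$ one has $\mu/H_P < 1/(CH_P^{\lambda})$, which is much smaller than $1/H_P$ for large $H_P$, so such vectors are excluded; meanwhile $\pm P$ itself exits $\mu K(\delta_0)$ once $\mu Y < H_P$. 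Hence $\mu_1(\delta_0) = 1/(CH_P^{\lambda-1})$, and the Minkowski upper bound yields $\mu_2(\delta_0) \leq 1$, producing a lattice vector $R_0 \in K(\delta_0)$ linearly independent of $P$. This is a legitimate competitor in the optimization defining $R$, showing $|R(\xi)| \leq |R_0(\xi)| \leq 1/H_P$.

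For the lower bound $|R(\xi)| \gg 1/H_P$, I would argue by contradiction: suppose the minimizer satisfies $|R(\xi)| = \delta < 1/(2H_P)$. Then $P$ and $R$ are two linearly independent lattice vectors in $K(\delta)$, so $\mu_2(\delta) \leq 1$, and Minkowski's lower bound forces $\mu_1(\delta) \geq 1/(2Y\delta)$. On the other hand $P \in K(\delta)$ gives $\mu_1(\delta) \leq \max(H_P/Y,\; H_P^{-\lambda}/\delta)$. Examining both branches of this maximum: comparing with $H_P/Y = 1/(CH_P^{\lambda-1})$ leads after a short calculation to $\delta \geq 1/(2H_P)$, contradicting the supposition; comparing with $H_P^{-\lambda}/\delta$ collapses to $C \geq 1/2$, contradicting the hypothesis on $C$.

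The main obstacle is pinning down $\mu_1(\delta_0)$ exactly, i.e., verifying that $P$ itself realizes the first minimum. This is precisely where the best-approximation property of the convergent $P$ is quantitatively needed: any integer pair of smaller height than $H_P$ and not proportional to $P$ must be at distance at least $\asymp 1/H_P$ from the line $a\xi+b=0$, a standard fact from the theory of continued fractions that must be invoked carefully. A minor secondary issue is that my body is phrased in terms of $|a| \leq Y$ whereas the statement uses $H_Q = \max(|a|,|b|) \leq Y$; this is harmless since for $(a,b)$ in our bodies $|a\xi+b|$ is so small that $|b| \asymp |a|$ up to bounded constants, so the two formulations differ only by a bounded dilation.
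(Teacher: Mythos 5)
Your lower-bound argument (the substantive direction $|R(\xi)|\gg H_P^{-1}$) is essentially the paper's own proof: Minkowski's Second Convex Body Theorem in the plane applied to the two linearly independent vectors $P$ and $R$, with a convex body that is just a rescaling of the one used in the paper (the paper takes $Z=H_P^{(\lambda+1)/2}$ and shows $\lambda_1\lambda_2<1/2$; your two branches are the same computation). One phrasing quibble: $P$ need not lie in $K(\delta)$ itself (e.g.\ $H_P\le CH_P^{\lambda}$ can fail for $\lambda$ near $1$), so "$\mu_2(\delta)\le 1$" is not justified as written; but since $\mu_1\mu_2\le \mu_R\cdot\mu_P = 1\cdot\max(H_P/Y,\,H_P^{-\lambda}/\delta)$ regardless of which of $P,R$ realizes which minimum, your two contradictions survive. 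That part is fine.

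The genuine gap is in your upper bound. You need $\mu_1(\delta_0)=1/(CH_P^{\lambda-1})$ exactly in order to conclude $\mu_2(\delta_0)\le 1$, and this fails precisely when $CH_P^{\lambda-1}<2$, i.e.\ when $\lambda$ is close to $1$ (which the lemma allows, and which is the generic situation for badly approximable $\xi$, where $H_P^{\lambda-1}\asymp 1$ along all convergents). In that regime $1/(CH_P^{\lambda-1})>1/2$, whereas a lattice point not proportional to $P$ only satisfies $|a\xi+b|\gg 1/H_P$, hence can have gauge as small as roughly $1/2$; so $\mu_1(\delta_0)$ may be strictly smaller than $1/(CH_P^{\lambda-1})$, you only get $\mu_2(\delta_0)\ll 1/C$, and the resulting second-minimum vector can violate the height constraint $H_Q\le CH_P^{\lambda}$, so it is not a legitimate competitor. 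Your own sanity check betrays this: "$\mu/H_P<1/(CH_P^{\lambda})$, which is much smaller than $1/H_P$" is false for $\lambda=1$, since $C<1/2$ makes $1/(CH_P)$ \emph{larger} than $1/H_P$. The paper avoids all of this by proving the upper bound in one line: take $R_0$ to be the best approximation polynomial at parameter $CH_P\le CH_P^{\lambda}$; Dirichlet gives $|R_0(\xi)|\le (CH_P)^{-1}\ll H_P^{-1}$, and $R_0$ is not a multiple of $P$ because $H_{R_0}<H_P$ and $P$ is primitive. You should replace your Minkowski argument for the upper bound by this (or restrict the lemma to $\lambda$ bounded away from $1$, which would still suffice for its application with $\lambda>3$).
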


    \begin{proof}
     By well known estimates for continued fractions, for 
     \[
        |R(\xi)| \ll H_P^{-1}
        \]
        it suffices to take $R(T)=Q(T)=aT+b$ 
        the best approximation polynomial
        for the parameter $Y=CH_P$. This
        is clearly not a multiple of $P$
        as $C<1$ and $P$ has a primitive coefficient vector since it is a best approximation polynomial.
         Moreover as $\lambda\ge 1$ clearly $H_Q\le C\cdot H_P\le C\cdot H_P^{\lambda}$,
        and by Dirichlet's Theorem and definition of $R$ indeed
        \[
        H_R\le C\cdot H_P, \qquad |R(\xi)|\le |Q(\xi)| \le (C\cdot H_P)^{-1}\ll_C H_P^{-1}.
        \]
        (Usually $R$ corresponds to
        the convergent $-b/a$ of $\xi$ preceding $-d/c$ in the continued fraction algorithm). 
        
        For the reverse inequality, 
        considering the 
        matrix formed
        by the integer coefficient vectors of $P,R$ gives by linear independence and multi-linearity of the determinant
        \[
        1\le \begin{vmatrix}
     a & b \\ 
     c & d 
\end{vmatrix} 
= \begin{vmatrix}
     a & R(\xi) \\ 
     c & P(\xi) 
\end{vmatrix}
\le |P(\xi)| H_R + |R(\xi)|H_P\le C+ |R(\xi)| H_P
        \]
        and the claim follows after a short rearrangement using $0<C<1/2$.
    \end{proof}

Now we prove the theorem.
Let $P, R$ be as in Lemma~\ref{lemur}, for arbitrary but fixed $C\in (0,1/2)$,
and assume $\lambda>3$. By Lemma~\ref{lemur}
they satisfy
\[
|P(\xi)|= H_P^{-\lambda}, \quad |R(\xi)|\asymp H_P^{-1}.
\]
Hence the polynomials
\[
V_1:= P^2, \qquad V_2:= PR
\]
are quadratic and satisfy
\begin{equation}  \label{eq:viaeq}
\max_{i=1,2} H_{V_i}\ll H_P^2, \qquad
|V_1(\xi)|= H_P^{-2\lambda}, \quad |V_2(\xi)|\asymp H_P^{-\lambda-1}.
\end{equation}
Take the parameter
\[
X= H_P^{\lambda-1-\varepsilon}
\]
for small $\varepsilon\in (0,(\lambda-3)/2)$ and consider all
linear or quadratic integer polynomials $V_3$ of height at most $H_{V_3}\le X$.
Clearly any polynomial in $\rm{span} \{V_1\}$ is not separable, hence 
for our exponent we can
restrict to the complementary set of polynomials. We show that any such $V_3\notin \rm{span} \{ V_1\}$ with $H_{V_3}\le X$ satisfies
\begin{equation} \label{eq:tru}
|V_3(\xi)| \gg H_P^{-\lambda-1}.
\end{equation}
If this is true then
\[
w_{sep}(2,\xi,X)\le \frac{\lambda+1+o(1)}{\lambda-1-\varepsilon}= 1+\frac{2+\varepsilon+o(1)}{\lambda-1-\varepsilon}.
\]
The claim \eqref{eq:zwe} of the theorem follows with $H_P\to\infty$ as we may take $\varepsilon$ arbitrarily small and $\lambda$
arbitrarily close to $w_1(\xi)$.
The latter specialization for Liouville
numbers follows as then the upper bound becomes $1$, while by considering linear polynomials only we get the reverse inequality, to conclude 
\[
1\ge \widehat{w}_{2,sep}(\xi)\ge\widehat{w}_{2,irr}(\xi)\ge  \widehat{w}_{1,irr}(\xi)=
\widehat{w}_1(\xi)=1.
\]
So the inequalities must be equalities.
To prove \eqref{eq:tru}, we distinguish two cases.

Case 1: $V_3\in \rm{span}\{ V_1, V_2\}\setminus \rm{span} \{V_1\}$.
This means $V_3=PS$ with linear $S\notin \rm{span} \{P\}$. 
Moreover by Gelfond's Lemma~\ref{gel}
we have $H_S\ll H_{V_3}/H_P \le X/H_P= H_P^{\lambda-2-\varepsilon}$, hence
we may assume $H_S < C\cdot H_P^{\lambda}$
if we assume $H_P$ is large enough. By Lemma~\ref{lemur} we conclude
\[
|S(\xi)| \gg H_P^{-1}
\]
and thus
\[
|V_3(\xi)|= |P(\xi)|\cdot |S(\xi)| \gg H_P^{-\lambda-1},
\]
so \eqref{eq:tru} holds.

Case 2: $V_3\notin \rm{span}\{ V_1, V_2\}$. So assume contrary to \eqref{eq:tru}
that some linear or quadratic integer polynomial $V_3$ (in fact it is not hard to show that $V_3$ must be irreducible quadratic) outside the span of $\{ V_1, V_2\}$ 
satisfies the estimates
\begin{equation}  \label{eq:evia}
H_{V_3}\le H_P^{\lambda-1-\varepsilon}, \qquad    |V_3(\xi)| \le H_P^{-\lambda-1}.  
\end{equation}
We proceed similarly as in the proof
of Lemma~\ref{lemur}. Writing 
the coefficient vectors of $V_i(\xi)$
as $V_{i}(\xi)=v_{i,0}+v_{i,1}\xi+v_{i,2}\xi^2$, $1\le i\le 3$, from
linear independence and multi-linearity of the determinant we get
\[
1\le \begin{vmatrix}
     v_{1,2} & v_{1,1} & v_{1,0}\\ 
     v_{2,2} & v_{2,1} & v_{2,0}\\
     v_{3,2} & v_{3,1} & v_{3,0} 
\end{vmatrix} =
\begin{vmatrix}
     v_{1,2} & v_{1,1} & V_1(\xi)\\ 
     v_{2,2} & v_{2,1} & V_2(\xi)\\
     v_{3,2} & v_{3,1} & V_{3}(\xi) 
\end{vmatrix}.
\]
Now expanding the right hand side determinant and using \eqref{eq:viaeq} and \eqref{eq:evia}
to estimate $|v_{i,j}|$ and
$|V_i(\xi)|$ from above, we can estimate
its absolute as $\ll H_P^{-\delta}$ for $\delta=\min\{ \varepsilon, \lambda-3\}>0$ using $\lambda>3$. Together with the lower bound $1$ we
derive a contradiction for large enough $H_P$.

\section{Proof of Theorem~\ref{n=2} }

    For \eqref{eq:EST1},
      note that
      the weaker estimate
       \[
    \underline{\kappa}_2(\xi) \le
    w_2(\xi)-w_2^{\ast}(\xi)\le w_2(\xi)-\widehat{w}_2(\xi)^2 + \widehat{w}_2(\xi) \le w_2(\xi)-2
    \]
      follows directly from combining~\cite[Theorem~2]{mos}
      (see also~\cite{die2}) and \eqref{eq:Con}.
       For the stronger claim, it follows more precisely from the proof of~\cite{mos} that there are infinitely many best approximations satisfying
      \[
      |P^{\prime}(\xi)| \ge |P(\xi)|\cdot H_P^{ \widehat{w}_2(\xi)^2 - \widehat{w}_2(\xi)+1  }
      \]
      which implies $P$ has a root
      $\alpha$ with
      \[
      |\xi-\alpha| \ll H(\alpha)^{-1}\cdot H_P^{ -(\widehat{w}_2(\xi)^2 - \widehat{w}_2(\xi))+o(1)  }.
      \]
Moreover, a result of Jarn\'ik~\cite{jarnik, jarnik2} implies that $P$ remains a best approximations up to a parameter of size $X\ge H_P^{\sigma-o(1)}$ for $\sigma= \widehat{w}_2(\xi)-1$.
       Indeed this happens whenever
       $P$ is linearly independent with
       its preceding and successive
       minimal polynomial, which occurs for our $P$ above. 
       Hence
        \[
    |\xi-\alpha| \ll H(\alpha)^{-1} H_P^{ -(\widehat{w}_2(\xi)^2 - \widehat{w}_2(\xi))+o(1)  }= H(\alpha)^{-1} X^{-\frac{\widehat{w}_2(\xi)^2 - \widehat{w}_2(\xi)}{\sigma} +o(1)}.
    \]
        In other words
    \[
    w^{\ast}(2,\xi,X)\ge \frac{\widehat{w}_2(\xi)^2 - \widehat{w}_2(\xi)}{\sigma}-o(1),
    \]
    hence since clearly $w(2,\xi,X)\le w_2(\xi)/\sigma+o(1)$ we get
    \[
    \kappa(2,\xi,X)\le w(2,\xi,X) - \frac{\widehat{w}_2(\xi)^2 - \widehat{w}_2(\xi)}{\sigma}+o(1)\le \frac{w_2(\xi)}{\widehat{w}_2(\xi)-1} - \widehat{w}_2(\xi)+o(1).
    \]
     The claim follows as there are arbitrarily large such $X$.
    
       For \eqref{eq:EST2},
       we again use $P,X$ as above and for simplicity write $w=w(2,\xi,X)$.
By Lemma~\ref{feld} below, $P$ has a root $\alpha$ satisfying
    \[
    |\xi-\alpha| \ll |P(\xi)|=X^{-w}= H(\alpha)^{-1} X^{-w} H_P= H(\alpha)^{-1} X^{-\frac{\sigma w-1}{\sigma} }.
    \]
    In other words
    \[
    w^{\ast}(2,\xi,X)\ge w-\frac{1}{\sigma}-o(1).
    \]
        Rearranging, for such $X$ we have
        \[
        \kappa(2,\xi,X)=w(2,\xi,X)-w^{\ast}(2,\xi,X)\le \frac{1}{\sigma}+o(1)\le \frac{1}{\widehat{w}_2(\xi)-1}+o(1),
        \]
        so the claim follows.

        \vspace{0.5cm}

        {\em The author thanks the referee for many useful suggestions, especially for
        finding simplifications in the proofs of Lemma 9.1 and 
        Theorem 3.5.
        The author further thanks Yann Bugeaud for helpful remarks, especially for bringing to the author's attention a question that
        led to Theorem~3.5.  }

\end{document}